\documentclass[a4paper,11pt,leqno]{amsart}
\usepackage{amsmath}
\usepackage{amsthm}
\usepackage{amsfonts}
\usepackage{amssymb}

\newtheorem{theorem}{Theorem}[section]{\bf}{\it}
\newtheorem{mainth}{Theorem}{\bf}{\it}
\newtheorem{lemma}[theorem]{Lemma}{\bf}{\it}
\newtheorem{proposition}[theorem]{Proposition}{\bf}{\it}
{\bf}{\it}

\theoremstyle{definition}

\newtheorem{rem}[theorem]{Remark}

\numberwithin{equation}{section}


\newcommand{\rd}{\mathrm{d}} 

\newcommand{\R}{\mathbb R}

\newcommand{\N}{\mathbb N}

\newcommand{\loc}{{\operatorname{loc}}}

\newcommand{\spt}{\operatorname{spt}}

\newcommand{\id}{{\operatorname{id}}}


\newdimen\vintkern\vintkern11pt
\def\vint{-\kern-\vintkern\int}


\newcommand{\norm}[1]{\lVert #1 \rVert}

\newcommand{\grad}{\nabla}

\newcommand{\bS}{\mathbb{S}}

\newcommand{\bM}{\mathbb{M}}

\newcommand{\haus}{\mathcal{H}}

\newcommand{\dmu}{\;\mathrm{d}\mu}

\newcommand{\cA}{\mathcal{A}}

\newcommand{\vol}{\mathrm{vol}}

\newcommand{\weakto}{\rightharpoonup}


\begin{document}

\title{Equilibrium measures for uniformly quasiregular dynamics}
\date{\today}

\author{Y\^usuke Okuyama}
\address{Division of Mathematics, Kyoto Institute of Technology, Sakyo-ku, Kyoto 606-8585, JAPAN
}
\email{okuyama@kit.ac.jp}
\thanks{Y. O. is partially supported by JSPS Grant-in-Aid for Young Scientists (B), 21740096.}

\author{Pekka Pankka}
\address{University of Helsinki, Department of Mathematics and Statistics (P.O. Box 68), FI-00014 University of Helsinki, Finland}
\email{pekka.pankka@helsinki.fi}
\thanks{P. P. is partially supported by the Academy of Finland projects \#126836 and \#256228.}

\subjclass[2010]{Primary 30C65; Secondary 37F10, 30D05}

\keywords{uniformly quasiregular dynamics, equilibrium measure, mixing, equidistribution,
measurable conformal structure, $\mathcal{A}$-harmonic potential theory}

\begin{abstract}
We establish the existence and fundamental properties of the equilibrium measure
in uniformly quasiregular dynamics. 
We show that a uniformly quasiregular endomorphism $f$ of degree at least $2$ 
on a closed Riemannian manifold admits an equilibrium measure $\mu_f$,
which is balanced and invariant under $f$ and non-atomic, and whose support agrees with
the Julia set of $f$. Furthermore we show that $f$ is strongly
mixing with respect to the measure $\mu_f$. 
We also characterize the measure $\mu_f$ using an approximation property by 
iterated pullbacks of points under $f$ up to a set of exceptional initial points
of Hausdorff dimension at most $n-1$. 
These dynamical mixing and approximation results are reminiscent of
the Mattila-Rickman equidistribution theorem 
for quasiregular mappings. Our methods are based on 
the existence of an invariant measurable conformal structure 
due to Iwaniec and Martin and the $\cA$-harmonic potential theory. 
\end{abstract} 
 
\maketitle

\section{Introduction}
\label{sec:introduction}

By the classical Liouville theorem, conformal mappings on the standard $n$-sphere $\bS^n$ are M\"obious transformations for $n\ge 3$. Quite surprisingly, Iwaniec and Martin showed in \cite{IwaniecT:Quas} that there exist measurable conformal structures on the $n$-sphere $\bS^n$ admitting conformal mappings that are not local homeomorphisms. These measurable conformal structures are quasiconformally equivalent to the standard conformal structure on $\bS^n$ and these conformal mappings, that plays the r\^ole in rational maps, are \emph{uniformly quasiregular} with respect to the standard structure.

Recall that a continuous mapping $f\colon \bM \to \bM$ on an oriented Riemannian $n$-manifold $\bM$ is \emph{$K$-quasiregular}, for some $K\ge 1$, if $f$ belongs to the local Sobolev space $W^{1,n}_\loc(\bM,\bM)$ and satisfies the distortion inequality
\begin{equation}
\label{eq:K}
\norm{Df}^n \le K J_f\quad \mathrm{a.e.}\ \bM,
\end{equation}
where $\norm{Df}$ is the point-wise operator norm of the differential $Df$ of the mapping $f$ and $J_f$ is the Jacobian determinant of the differential.
A quasiregular endomorphism $f\colon \bM \to \bM$ is said to be \emph{uniformly quasiregular} if there exists $K\ge 1$ so that all the iterates of $f$ are $K$-quasiregular.

Results of Peltonen \cite{PeltonenK:Exauqm} and Astola, Kangaslampi, and Peltonen \cite{AstolaL:Latmcm} show that the existence of branching uniformly quasiregular dynamics is not limited to $\bS^n$. However, results of Martin, Meyer, and Peltonen \cite{MartinG:GenLp} show that, among space forms, only spherical space forms admit branching uniformly quasiregular endomorphisms; see \cite{MartinG:GenLp} for the terminology and for related results. Since the existence question of branching uniformly quasiregular dynamics is interesting also for manifolds that are not space forms, we state our results for general closed manifolds.

To state our main results, we recall that a Borel measure $\mu$ on $\bM$ is \emph{balanced under $f$} if 
\[
f^* \mu = (\deg f)\mu,
\]
where $f^*\mu$ is the pull-back of $\mu$ under $f$; see Section \ref{sec:construct} for details. We note that if $\mu$ is balanced under $f$, then it is \emph{invariant} under $f$, that is, $f_*\mu=\mu$.
 
Our main result is the following existence theorem for an invariant measure; the approximation property of $\mu_f$ is reminiscent to the complex analytic case, see Lyubich \cite{LjubichM:Entpre}. 

In all of the following statements, we assume that $\bM$ is closed, connected, and oriented Riemannian $n$-manifold.
Here $\weakto$ means the weak convergence of measures on $\bM$.

\begin{mainth}
\label{thm:existence_approximability}
Let $f\colon \bM\to \bM$ be a uniformly quasiregular endomorphism of degree $d>1$. 
Then there exists an invariant measure $\mu_f$ satisfying 
the approximation $($or equidistribution$)$ property 
\[
\frac{(f^k)^*\delta_a}{d^k}\weakto \mu_f
\]
as $k\to\infty$
for every $a\in \bM$ except for a subset of Hausdorff dimension at most $n-1$ in $\bM$. 
\end{mainth}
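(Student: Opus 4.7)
The plan is to adapt the potential-theoretic proof of Brolin and Lyubich from complex dynamics, with the logarithmic potential replaced by a nonlinear $\cA$-harmonic potential built from an $f$-invariant measurable conformal structure. The first ingredient is the Iwaniec--Martin theorem, which yields a bounded measurable conformal structure $G$ on $\bM$ satisfying $f^*G=G$ a.e.; with respect to $G$ the map $f$ becomes $1$-quasiregular, so pullback by $f$ preserves the conformally invariant nonlinear operator $\mathcal{L}u:=-\div \cA(x,\grad u)$ adapted to $G$. The key consequence of conformal invariance is the distributional identity $\mathcal{L}(u\circ f^k)=(f^k)^*(\mathcal{L}u)$ for admissible $u$.

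The measure $\mu_f$ is constructed by weak$^*$ compactness. Let $\omega$ be the normalized Riemannian volume. The measures $\mu_k:=(f^k)^*\omega/d^k$ are probabilities on $\bM$ by the area formula, and any weak$^*$ cluster point $\mu_f$ of $(\mu_k)$ obtained from Banach--Alaoglu satisfies $f^*\mu_f=d\mu_f$ by passing to the limit in $f^*\mu_k/d=\mu_{k+1}$; invariance $f_*\mu_f=\mu_f$ then follows. For the approximation from points, I would introduce a symmetric $\cA$-harmonic Green kernel $g(x,y)$ on $\bM$ with logarithmic singularity on the diagonal and $\mathcal{L}_x g(\cdot,y)=\delta_y-\omega$, the compensating $-\omega$ being forced by compactness of $\bM$. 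Combining the above pullback identity with the homogeneity of $\cA$ in its second argument yields
\[
\mathcal{L}\bigl(d^{-k/(n-1)}g(f^k(\cdot),a)\bigr)=\mu_{k,a}-\mu_k,\qquad \mu_{k,a}:=\frac{(f^k)^*\delta_a}{d^k},
\]
so that $u_{k,a}:=d^{-k/(n-1)}g(f^k(\cdot),a)$ is the $\cA$-harmonic potential of the signed measure $\mu_{k,a}-\mu_k$. Once one shows that $(u_{k,a})$ is bounded in $W^{1,n}(\bM)$, any weak subsequential limit $u_\infty$ will satisfy $\mathcal{L}u_\infty=\mu_{\infty,a}-\mu_f$ for the corresponding limit $\mu_{\infty,a}$ of $\mu_{k,a}$; a comparison-principle argument in $\cA$-harmonic potential theory should then identify $\mu_{\infty,a}$ with $\mu_f$ and force $u_\infty$ to be constant, yielding $\mu_{k,a}\weakto\mu_f$.

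The main obstacle is the identification of the exceptional set $E$ consisting of those $a$ for which the uniform $W^{1,n}$-bound on $u_{k,a}$ fails, and the proof of $\dim_H E\le n-1$. Heuristically, $E$ consists of points whose backward orbit clusters on the topologically exceptional structure of $f$ (such as superattracting periodic components of the Fatou set), causing the logarithmic singularities of $g(\cdot,a)$ to pile up along $f^{-k}(a)$ faster than the normalizing factor $d^{k/(n-1)}$. I would first bound the $\cA$-harmonic $n$-capacity of $E$ by a Borel--Cantelli-type argument exploiting the balance relation $(1/d)f^*\mu_f=\mu_f$ applied to the singular parts of $u_{k,a}$, and then convert this capacity bound to the Hausdorff dimension bound via the standard $n$-capacity-to-dimension comparison of Heinonen, Kilpel\"ainen, and Martio. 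Establishing the sharp estimate $\dim_H E\le n-1$ and rigorously verifying, via removability theorems for $\cA$-harmonic functions across small sets, that $u_\infty$ is indeed constant will constitute the technical heart of the proof.
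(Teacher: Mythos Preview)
Your outline has the right ingredients (invariant conformal structure, the pullback identity $\mathcal{L}(u\circ f^k)=(f^k)^*\mathcal{L}u$, a Borel--Cantelli scheme for the exceptional set), but the central device---a nonlinear Green kernel $g(\cdot,a)$ solving $\mathcal{L}g=\delta_a-\omega$---cannot carry the argument. In the $n$-conformal case the fundamental solution of $\mathcal{L}$ is logarithmic, so $|\grad g(\cdot,a)|^n\sim|x-a|^{-n}$ near $a$, which is not locally integrable; hence $g(\cdot,a)\notin W^{1,n}(\bM)$, and the functions $u_{k,a}=d^{-k/(n-1)}g(f^k(\cdot),a)$ are never in $W^{1,n}$, regardless of $k$. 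Your plan to ``bound $(u_{k,a})$ in $W^{1,n}$'' therefore fails before it starts. Even in weaker spaces, passing the nonlinear equation $\mathcal{L}u_{k,a}=\mu_{k,a}-\mu_k$ through a weak limit would require a.e.\ convergence of gradients, which is not automatic; and a Green kernel for a genuinely nonlinear operator has no reason to be symmetric. Separately, your existence step via Banach--Alaoglu only produces subsequential limits and does not by itself identify a unique $\mu_f$.

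The paper keeps the nonlinear $\cA_f$-equation only for the existence half, and with \emph{smooth} data: one solves $\rd^*\cA_f(\rd u)=\star(f^*\omega/d-\omega)$ for a smooth probability $n$-form $\omega$, so that $u\in W^{1,n}(\bM)$ genuinely, and the pullback identity together with $G$-conformality yields the telescoping bound
\[
\left|\int_\bM\psi\,\rd\!\left(\frac{(f^{k+1})^*\omega}{d^{k+1}}-\frac{(f^k)^*\omega}{d^k}\right)\right|\le\norm{\rd\psi}_{n,G}\norm{\rd u}^{n-1}_{n,G}\,d^{-k/n},
\]
summable in $k$; the full sequence $((f^k)^*\omega/d^k)$ is thus weak$^*$-Cauchy and converges to a unique $\mu_f$, independent of $\omega$. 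For the equidistribution half the nonlinear operator is dropped entirely in favour of \emph{linear} Riesz $1$-potentials. One tests with the pushforward $v=(f^k)_*\phi/d^k$, which is continuous and in $W^{1,n}(\bM)$, uses the estimate $|v(a)-\bar v|\le\int_\bM|\grad v|(y)\,|a-y|^{1-n}\,\vol_\bM$ coming from the ordinary Laplace--Beltrami Green function, and converts this into a Riesz $1$-capacity bound $C_1(K_{\varepsilon,k}(\phi))\le K^{1/n}\norm{\grad\phi}_n\,\varepsilon^{-1}d^{-k/n}$ on the set where $|\int\phi\,\rd(\mu_{k,a}-\mu_k)|\ge\varepsilon$. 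The Borel--Cantelli step then runs against Hausdorff $(n-1+\epsilon)$-content via the classical Riesz capacity--content comparison, yielding $\dim_H E(f)\le n-1$.
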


We prove Theorem \ref{thm:existence_approximability} in two parts: first the existence in Theorem \ref{thm:limits_Lp} and then the approximation (or equidistribution) property in Theorem \ref{th:equidist}. 

We call the measure $\mu_f$ is an \emph{equilibrium measure of $f$} 
since this measure has properties familiar from complex dynamics.
Here, the Julia set of $f$ is defined by
\begin{gather*}
 \mathcal{J}(f):=\{x\in\bM;\{f^k;k\in\mathbb{N}\}\text{ is not normal on any open ball around }x\}.
\end{gather*}

\begin{mainth}
\label{thm:properties}
Let $f\colon \bM\to \bM$ be a uniformly quasiregular endomorphism of degree $d>1$.
Then the equilibrium measure $\mu_f$ is non-atomic and balanced under $f$, 
and the support $\spt\mu_f$ agrees with $\mathcal{J}(f)$.
\end{mainth}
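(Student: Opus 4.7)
The plan is to prove the three assertions in the order \emph{balance}, \emph{support equals Julia set}, \emph{non-atomicity}, since each step naturally builds on the previous one. For balance, I would apply $f^*$ directly to the equidistribution $d^{-k}(f^k)^*\delta_a\weakto\mu_f$ from Theorem \ref{thm:existence_approximability}, for a non-exceptional initial point $a$. The pullback $f^*$ is weak-$*$ continuous on Borel measures through the duality $\int\phi\,d(f^*\nu)=\int\bigl(\sum_{y\in f^{-1}(x)}i(y,f)\phi(y)\bigr)\,d\nu(x)$, because for continuous $\phi$ the trace on the right is bounded and continuous off the branch set, which is negligible for the approximating measures. Thus $d^{-k}(f^{k+1})^*\delta_a\weakto f^*\mu_f$, and comparing with the approximation property at level $k+1$ forces $f^*\mu_f=d\mu_f$. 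As a by-product, $\spt\mu_f$ is completely invariant, since $\spt f^*\mu=f^{-1}(\spt\mu)$ by openness of quasiregular maps.

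For the inclusion $\spt\mu_f\subset\mathcal{J}(f)$, let $U\subset\bM\setminus\mathcal{J}(f)$ be open. The family $\{f^k|_U\}$ is normal by definition, and a quasiregular Montel-type theorem gives a subsequential local-uniform limit $g$ whose image $g(U)$ omits an open set; picking a non-exceptional $a$ in this omitted set, the preimage $f^{-k_j}(a)$ eventually avoids compact subsets of $U$, so $d^{-k_j}(f^{k_j})^*\delta_a(K)\to 0$ for every compact $K\subset U$, and the approximation yields $\mu_f(U)=0$. The reverse inclusion $\mathcal{J}(f)\subset\spt\mu_f$ uses the complete invariance of $\spt\mu_f$ just established together with the minimality property of the Julia set: the open, completely invariant set $\bM\setminus\spt\mu_f$ is contained in the Fatou set, as otherwise non-normality somewhere in it would violate a Montel-type theorem once one chooses a non-exceptional point outside $\spt\mu_f$ as in Theorem \ref{thm:existence_approximability}.

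For non-atomicity, suppose for contradiction that $M:=\max_{x\in\bM}\mu_f(\{x\})>0$ and let $A_M$ denote the (finite) maximizer set. Invariance $f_*\mu_f=\mu_f$ gives $\mu_f(\{f(x)\})\ge\mu_f(\{x\})$, so $f(A_M)\subset A_M$, and $f$ permutes a non-empty periodic subset. Taking $x\in A_M$ with $f^p(x)=x$, iterated balance $(f^p)^*\mu_f=d^p\mu_f$ applied to the single preimage point $x$ of itself yields
\[
\mu_f(\{x\})=d^{-p}\,i(x,f^p)\,\mu_f(\{x\}),
\]
forcing $i(x,f^p)=d^p$: the iterate $f^p$ is totally ramified at $x$. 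A branched covering of local degree $d^p$ fixing $x$ admits a neighborhood $W$ with $f^p(W)\Subset W$, so $x$ lies in the Fatou set, contradicting $x\in\spt\mu_f\subset\mathcal{J}(f)$ from the previous step.

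The main obstacle is the reverse support inclusion $\mathcal{J}(f)\subset\spt\mu_f$, which relies on the minimality of the Julia set among closed completely invariant sets in the quasiregular setting and is intertwined with the quasiregular Montel theorem and with the $(n-1)$-dimensional exceptional set already appearing in Theorem \ref{thm:existence_approximability}. A secondary technical point is the weak-$*$ continuity of $f^*$ used in the balance step, which is naturally encoded by the $\cA$-harmonic potential theory developed earlier in the paper and by the fact that the branch set of $f$ is negligible for the limiting measure.
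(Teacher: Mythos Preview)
Your overall architecture---balance first, then $\spt\mu_f=\mathcal{J}(f)$, then non-atomicity---matches the paper's, and your treatments of balance and non-atomicity are essentially the same as the paper's. Balance follows even more directly from the construction in Theorem~\ref{thm:limits_Lp} with a smooth $\omega$ (no branch-set discussion needed, since $f_*\phi$ is continuous everywhere for $\phi\in C^0(\bM)$); and your non-atomicity argument is the paper's argument verbatim, the final contradiction being exactly the fact $\mathcal{E}(f)\cap\mathcal{J}(f)=\emptyset$ recorded before Proposition~\ref{prop:HMM}.

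There is, however, a genuine gap in your argument for $\spt\mu_f\subset\mathcal{J}(f)$. You assert that a subsequential limit $g$ of $\{f^k|_U\}$ has $g(U)$ omitting an open set, so that you can choose a non-exceptional $a\notin g(U)$. This is not justified: a non-constant quasiregular limit $g\colon U\to\bM$ is open, but nothing prevents $g(U)$ from being all of $\bM$, and even on compact subsets $K\subset U$ the image $g(K)$ need not miss an open set. The paper bypasses this by working with a smooth volume form $\omega$ rather than $\delta_a$: on a ball $B(x,3r)$ in the Fatou set one has, along the convergent subsequence, $\int_{B(x,r)}(f^k)^*\omega\le\int_{B(x,2r)}F^*\omega<\infty$ (via Rickman's \cite[Lemma~VI.8.8]{RickmanS:Quam}), and dividing by $d^k$ gives $\mu_f(B(x,r))=0$. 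If you insist on point measures, the correct replacement for your claim is a Hurwitz-type bound: locally uniform convergence $f^{k_j}\to g$ forces $\deg(a,f^{k_j},K')\to\deg(a,g,K')<\infty$ for suitable $K'\Subset U$, hence $(f^{k_j})^*\delta_a(K)=O(1)$; but this still requires choosing $a\notin g(\partial K')$ and is more delicate than the paper's route.

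For the reverse inclusion $\mathcal{J}(f)\subset\spt\mu_f$, your Montel-on-the-complement idea is correct but the sketch hides a step: to apply Rickman's Montel theorem you need $\spt\mu_f$ to contain at least $q=q(n,K)$ points, which you can extract from the forward inclusion together with $\mathcal{E}(f)\cap\mathcal{J}(f)=\emptyset$ (a finite completely invariant set lies in $\mathcal{E}(f)$). The paper instead invokes the expansion property of $\mathcal{J}(f)$ (Proposition~\ref{prop:HMM}): choosing $a\in\spt\mu_f\setminus\mathcal{E}(f)$ and using backward invariance $f^{-1}(\spt\mu_f)\subset\spt\mu_f$ gives $\mathcal{J}(f)\subset\overline{\bigcup_k f^{-k}(a)}\subset\spt\mu_f$ directly.
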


As our final result, we show that $f$ is mixing with respect to $\mu_f$. 
This result and the approximation property with respect to $\mu_f$ 
in Theorem \ref{thm:existence_approximability} can be viewed as dynamical counterparts of the Mattila-Rickman equidistribution theorem \cite[Theorem 5.1]{MattilaP:Avecfq}.

\begin{mainth}
\label{thm:mixing}
Let $f\colon \bM\to \bM$ be a uniformly quasiregular endomorphism of degree $d>1$.
Then $f$ is strongly mixing with respect to the invariant measure $\mu_f$, that is, for every $\phi,\psi\in L^2(\mu_f)$,
\begin{gather}
\label{eq:mixing}
 \lim_{k\to\infty}\int_{\bM}(\phi\circ f^k)\psi\rd\mu_f
=\int_{\bM}\psi\rd\mu_f\int_{\bM}\phi\rd\mu_f.
\end{gather}
In particular, $\mu_f$ is ergodic under $f$. 
\end{mainth}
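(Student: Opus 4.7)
My plan is to reduce strong mixing of $f$ with respect to $\mu_f$ to the equidistribution property already established in Theorem~\ref{thm:existence_approximability} via the transfer (Perron--Frobenius) operator
\[
\mathcal{L}\psi(x):=\int_{\bM}\psi\,\rd\!\left(\frac{f^*\delta_x}{d}\right),
\]
well defined for $\psi\in L^\infty(\mu_f)$ and $x\in\bM$. Since $C(\bM)$ is dense in $L^2(\mu_f)$ and $\mathcal{L}$ is a contraction on $L^\infty$, a standard approximation argument reduces the verification of \eqref{eq:mixing} to the case of continuous $\phi,\psi$.

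\textbf{Transfer operator duality.} The first step is the identity
\[
\int_{\bM}(\phi\circ f^k)\,\psi\,\rd\mu_f
=\int_{\bM}\phi\cdot\mathcal{L}^k\psi\,\rd\mu_f
\qquad(k\ge 1),
\]
which is obtained by writing $\mu_f=d^{-k}(f^k)^*\mu_f$ (the balanced identity of Theorem~\ref{thm:properties}), substituting into the left-hand side, and unfolding the definition of the pull-back measure; the local indices appearing in the pullback are absorbed exactly by the normalization $1/d^k$. Iteration then gives the representation
\[
\mathcal{L}^k\psi(x)=\int_{\bM}\psi\,\rd\!\left(\frac{(f^k)^*\delta_x}{d^k}\right).
\]

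\textbf{Passage to the limit.} For continuous $\psi$, Theorem~\ref{thm:existence_approximability} yields
\[
\mathcal{L}^k\psi(x)\To \int_{\bM}\psi\,\rd\mu_f\qquad\text{as }k\to\infty
\]
for every $x$ outside an exceptional set $E\subset\bM$ with $\dim_{\haus}E\le n-1$. Since $\|\mathcal{L}^k\psi\|_\infty\le\|\psi\|_\infty$, once $\mu_f(E)=0$ has been established, dominated convergence applied to the duality identity produces \eqref{eq:mixing}; ergodicity follows at once from mixing.

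\textbf{Main obstacle.} The crux is thus the measure-theoretic estimate $\mu_f(E)=0$. My plan is to exploit that $\spt\mu_f=\mathcal{J}(f)$ (Theorem~\ref{thm:properties}) together with a sharpening of the argument behind Theorem~\ref{thm:existence_approximability} to show that the exceptional points in fact lie in a backward-invariant subset of the Fatou set, and hence are disjoint from $\spt\mu_f$ up to a $\mu_f$-null set; this mirrors the situation for the finite exceptional set in the Brolin--Lyubich theory. An alternative and perhaps cleaner route is to upgrade the pointwise approximation of Theorem~\ref{thm:existence_approximability} to $L^1(\mu_f)$-convergence $\mathcal{L}^k\psi\to\int\psi\,\rd\mu_f$ directly from the $\cA$-harmonic potential-theoretic estimates used in its proof, which would bypass the Hausdorff-dimension bound entirely and make the passage to \eqref{eq:mixing} a one-line application of Cauchy--Schwarz.
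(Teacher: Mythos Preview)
Your reduction via the transfer operator is clean and the duality identity is correct, but the proof hinges entirely on $\mu_f(E)=0$, and neither of your proposed remedies closes this gap. The first route---arguing that the exceptional set $E$ lies in the Fatou set---does not go through: in the Brolin--Lyubich setting the exceptional set is finite and consists of superattracting periodic points, hence Fatou; here you only know $\dim_{\haus}E\le n-1$, and since $\mathcal{J}(f)=\spt\mu_f$ may itself have Hausdorff dimension at most $n-1$, the dimension bound on $E$ gives no control whatsoever on $\mu_f(E)$. There is no mechanism in the paper (nor any obvious one) forcing $E\subset\bM\setminus\mathcal{J}(f)$.

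Your second route---upgrading the pointwise convergence to $L^1(\mu_f)$ directly from the $\cA$-harmonic estimates---is the right instinct, but as stated it is a wish rather than an argument. The paper's proof shows what is actually needed: rather than working with Dirac masses $\delta_x$ (where the Riesz-capacity estimate of Lemma~\ref{lemma:cap_key} only yields a dimension bound on the bad set), one should exploit the \emph{quantitative} convergence estimate \eqref{eq:mu_limit}, which bounds $\bigl|\int\chi\,\rd(\mu_f-(f^{k+m})^*\omega/d^{k+m})\bigr|$ by $\norm{\rd\chi}_{n,G}\norm{\rd u}_{n,G}^{n-1}\,d^{-(k+m)/n}/(1-d^{-1/n})$ for smooth $\chi$. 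Applying this with $\chi=\psi(\phi\circ f^k)$ and using that $\norm{\rd(\phi\circ f^k)}_{n,G}=d^{k/n}\norm{\rd\phi}_{n,G}$ (since $f$ is a $G$-transformation) gives a bound of order $d^{-m/n}$, uniform in $k$. One then fixes $m$ large, sets $\tau=(f^m)^*\omega/d^m$, notes that $\phi\tau\in L^p$ so that Theorem~\ref{thm:limits_Lp} applies to it, and lets $k\to\infty$ followed by $m\to\infty$. This two-parameter argument replaces your pointwise-convergence-plus-null-set step and avoids the obstacle entirely.
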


The proofs of Theorems \ref{thm:existence_approximability} and \ref{thm:mixing} are, in spirit, potential theoretic. Given a manifold $\bM$ and a uniformly quasiregular endomorphism $f$ of $\bM$, we interpret the invariant conformal structure of Iwaniec and Martin in terms of the given Riemannian structure. This translates the $n$-Laplace operator $\Delta_n$ of the measurable conformal structure to an $f$-invariant $\cA$-harmonic equation. Although the $n$-Laplace operator $\Delta_n$ and the corresponding $\cA$-harmonic operator are non-linear, the methods in \cite{DrasinD:Equnt} are within our reach. 


\section{Preliminaries}

Let $\bM$ be a closed, oriented, and connected Riemannian $n$-manifold, $n\ge 2$. 
We denote the Riemannian metric on $\bM$ by $\langle \cdot, \cdot \rangle$ and the induced distance between points $x$ and $y$ on $\bM$ by $|x-y|$. Given $x\in \bM$ and $r>0$, we denote by $B(x,r)$ the open ball of radius $r$ about $x$ in this metric. The corresponding closed ball is $\bar B(x,r)$. 
We denote by $\vol_\bM$ the volume form induced by this metric. We denote by $\haus^s$ the \emph{Hausdorff $s$-measure on $\bM$} associated to the distance.

We denote by $L^p(\bigwedge^m \bM)$ the space of $p$-integrable $m$-forms on $\bM$ and by $W^{1,p}(\bigwedge^m \bM)$ the Sobolev space of $p$-integrable $m$-forms having (weak) exterior derivative in $L^p(\bM)$; see \cite{IwaniecT:Nonhtm} for details. We denote also by $\star$ the \emph{Hodge star} associated to the Riemannian metric and by $d^*$ the \emph{co-exterior derivative} on $\bM$, that is, the operator $d^* = (-1)^{m(n-m)} \star d\star$ on $m$-forms.

Given a measure $\mu$ on $\bM$, we denote the \emph{support of $\mu$}, i.e.\;the smallest closed set $E$ satisfying $\mu(\bM\setminus E)=0$, by $\spt \mu$.

Throughout the article we consider quasiregular endomorphisms $f$ on closed Riemannian $n$-manifolds $\bM$, that is, continuous mappings $\bM\to \bM$ in the Sobolev class $W^{1,n}(\bM,\bM)$ satisfying the distortion inequality \eqref{eq:K}. 
We define the Sobolev space $W^{1,n}(\bM,\bM)$ as in \cite{HajlaszP:Weadmb}, that is, we fix a smooth Nash embedding $\iota\colon \bM \to \R^N$, and say that $f \in W^{1,n}(\bM,\bM)$ if coordinates of $\iota \circ f$ are Sobolev functions in $W^{1,n}(\bM)$. 

The \emph{push-forward} $f_*\psi \colon \bM \to \R$ of $\psi\in C^0(\bM)$ by a quasiregular endomorphism $f$ on $\bM$ is defined by
\[
f_*\psi(y):= \sum_{x \in f^{-1}(y)} i(x,f) \psi(x),
\]
where $i(x,f)$ is the \emph{local index of $f$} at $x$; see e.g.\;\cite[Section 4]{RickmanS:Quam}. In particular, $f_*\psi$ is continuous and belongs to $W^{1,n}(\bM)$; see \cite[Lemmas 14.30 and 14.31]{HeinonenJ:Nonptd}.
Given a Borel measure $\mu$ on $\bM$ the pull-back measure $f^*\mu$ is defined by the formula
\[
\int_{\bM}\phi \rd(f^*\mu)=\int_{\bM}f_*\phi\rd\mu,
\]
where $\phi \in C^0(\bM)$.

In the proof of Theorem \ref{thm:existence_approximability} we use  both classical potential theory of measures and non-linear potential theory of $\cA$-harmonic equations. We recall now some classical facts on potential theory that will be used in the following sections. 

Given a finite Borel measure $\mu$ on $\bM$, we denote by $U^\mu(x)$ the 
(Riesz) $1$-potential of $\mu$ at $x\in \bM$, that is,  
\[
U^\mu(x) = \int_\bM \frac{\rd\mu(y)}{|x-y|^{n-1}}
\]
for $x\in \bM$.

Given a compact set $E\subset \bM$, the energy functional
\begin{equation}
\label{eq:I_1}
I_1(\mu) = \int_E \int_E \frac{\rd\mu(x)\rd\mu(y)}{|x-y|^{n-1}}
\end{equation}
admits the unique minimizer $\lambda_E$ among all probability 
Borel measures $\mu$ on $E$
if $\sup_{\mu}I_1(\mu)<\infty$, where $\mu$ ranges over all probability
Borel measures on $E$; see e.g.\;\cite[Chapter II \S 1]{LandkofN:Foumpt}. We call the measure $\lambda_E$ the \emph{equilibrium measure of $E$ with respect to the Riesz $1$-capacity} and denote by $W_1(E)$ the minimal energy $I_1(\lambda_E)$. The \emph{Riesz $1$-capacity} $C_1(E)$ of the compact set $E$ is
\[
C_1(E) = \frac{1}{W_1(E)}.
\]
We have 
\begin{gather}
 U^{\lambda_E}(x) \le \frac{1}{C_1(E)}\label{eq:upper}
\end{gather}
for all $x\in \bM$; in fact, $U^{\lambda_E}\equiv C_1(E)^{-1}$ in $\spt\mu\setminus E'$, where $C_1(E')=0$; see e.g.\;\cite[p.\ 137]{LandkofN:Foumpt} for more details.


\section{ $\cA$-harmonic potentials}

In this section we prove the following proposition on the existence of solutions to certain $\cA$-harmonic Poisson-type equations.

\begin{proposition}\label{prop:standard_qr}
Let $f\colon \bM \to \bM$ be a $K$-quasiregular endomorphism of degree $d>0$. 
Then there exists $p = p(n,K)>1$ so that
for every $\omega\in L^p(\bigwedge^n\bM)$, the following $\cA$-harmonic equation
\begin{gather}
\label{eq:A_harm_omega}
 \rd^* \cA(\rd u) = \star \left(\frac{f^*\omega}{d}-\omega\right)
\end{gather}
has a solution $u\in W^{1,n}(\bM)$.
\end{proposition}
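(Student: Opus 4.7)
The plan is to cast \eqref{eq:A_harm_omega} as a Poisson problem for a monotone elliptic operator of $n$-growth on the closed manifold $\bM$ and invoke the Browder--Minty existence theory. Two ingredients are needed for solvability on a closed manifold: the right-hand side $g := \star(f^*\omega/d - \omega)$ must have vanishing integral over $\bM$, and it must lie in the dual of $W^{1,n}(\bM)/\R$, which here reduces to showing that $g$ lies in some $L^q(\bM)$ with $q > 1$.

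The mean-zero condition is immediate from the degree formula for quasiregular endomorphisms: for every integrable $n$-form $\omega$ one has $\int_\bM f^*\omega = d\int_\bM \omega$, so
\[
\int_\bM g\,\vol_\bM = \frac{1}{d}\int_\bM f^*\omega - \int_\bM \omega = 0.
\]

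For the integrability, the key input is the Gehring--Iwaniec higher integrability of $K$-quasiregular maps: there is an exponent $s = s(n,K) > 1$ with $J_f \in L^s(\bM)$. Writing $\omega = h\,\vol_\bM$ and $f^*\omega = (h \circ f) J_f\,\vol_\bM$, I would combine H\"older's inequality with the change-of-variables bound $\int_\bM (|h|^r \circ f) J_f \le d\int_\bM |h|^r$ (valid for every $r \ge 1$, with constant $d$ coming from the pointwise multiplicity of $f$) to extract exponents $p = p(n,K) > 1$ and $q = q(n,K) > 1$ such that
\[
\|f^*\omega\|_{L^q(\bigwedge^n \bM)} \le C(n,K,d)\,\|\omega\|_{L^p(\bigwedge^n \bM)}.
\]
Since $\bM$ is a closed $n$-manifold, Sobolev embedding gives $W^{1,n}(\bM) \hookrightarrow L^r(\bM)$ for every finite $r$, so by duality $L^q(\bM) \hookrightarrow (W^{1,n}(\bM))^*$. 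Combined with the trivial inclusion $\omega \in L^p \subset (W^{1,n})^*$ and the mean-zero condition, this places $g$ in $(W^{1,n}(\bM)/\R)^*$.

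With mean zero and dual integrability in hand, the operator $u \mapsto \rd^* \cA(\rd u)$, read off from the Iwaniec--Martin invariant measurable conformal structure against the background Riemannian metric, is of Leray--Lions type of order $n$ on $W^{1,n}(\bM)$: it is continuous, strictly monotone, of $(n-1)$-growth, and, through the Poincar\'e inequality on $W^{1,n}(\bM)/\R$, coercive of order $n$. The Browder--Minty theorem then yields a weak solution $u \in W^{1,n}(\bM)$ of \eqref{eq:A_harm_omega}. I expect the main technical step to be the H\"older computation above: the exponents $p$ and $q$ must be tuned in a compatible way depending on the Gehring exponent $s(n,K)$, and it is precisely this interplay --- passing the composition $h \circ f$ through $J_f$ while keeping a power of $J_f$ of order $\le s$ on the residual factor --- that determines the threshold $p(n,K)$ appearing in the statement.
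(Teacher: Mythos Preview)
Your proposal is correct and follows essentially the same route as the paper: mean-zero from the degree formula, higher integrability of $J_f$ to push $f^*\omega$ into some $L^q$ with $q>1$ via a H\"older splitting, and then monotone operator theory (Minty--Browder) to solve. The only cosmetic difference is that the paper first passes to a primitive $\tau\in W^{1,p}(\bigwedge^{n-1}\bM)$ of the right-hand side and invokes surjectivity of $\cA\colon L^n(\bigwedge^1\bM)\to L^{n/(n-1)}(\bigwedge^1\bM)$, whereas you apply Browder--Minty directly to the weak formulation on $W^{1,n}(\bM)/\R$; both packagings are standard and equivalent.
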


In \eqref{eq:A_harm_omega}, $\cA$ is a uniformly elliptic measurable bundle map $\cA \colon T^*\bM \to T^*\bM$ so that there exist $a>0$ and $b>0$ satisfying the following conditions:
\begin{enumerate}
\item $\langle \cA(\xi) - \cA(\zeta), \xi-\zeta\rangle \ge a (|\xi|+|\zeta|)^{n-2}|\xi-\zeta|^2$,
\item $|\cA(\xi)-\cA(\zeta)| \le b (|\xi|+|\zeta|)^{n-2} |\xi-\zeta|$, and
\item $\cA(t\xi) = t|t|^{n-2}\cA(\xi)$
\end{enumerate}
for all $\xi,\zeta \in T^*_x \bM$, $t \in \R$, and for almost all $x\in \bM$. Furthermore, 
we assume that for every measurable $1$-form $\omega$ on $\bM$,
so is $x \mapsto \cA_x(\omega)$.
We refer \cite{HeinonenJ:Nonptd} for a detailed discussion on the non-linear potential theory of $\cA$-harmonic equations.

The proof of Proposition \ref{prop:standard_qr} is based on the higher integrability of quasiregular mappings and the following lemma on solvability of $\cA$-harmonic Poisson equations.

\begin{lemma}\label{lem:standard}
Let $p>1$ and let $\omega \in L^p(\bigwedge^n T\bM)$ be an $n$-form so that 
\begin{gather}
\int_{\bM} \omega = 0.\label{eq:weakly_exact}
\end{gather}
Then there exists a solution $u \in W^{1,n}(\bM)$ to the $\cA$-harmonic equation
\begin{gather}
 \rd^* \cA(\rd u) = \star \omega,\label{eq:cohomological}
\end{gather}
\end{lemma}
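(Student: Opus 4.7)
The plan is to construct $u$ as the solution of a weak formulation of \eqref{eq:cohomological} via the Browder--Minty surjectivity theorem for monotone coercive operators on the quotient of $W^{1,n}(\bM)$ by constants, the compatibility condition \eqref{eq:weakly_exact} being precisely what allows the natural linear functional induced by $\omega$ to descend to that quotient.

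First I would set the stage. Let
\[
V := \Bigl\{ v \in W^{1,n}(\bM) : \int_\bM v \,\vol_\bM = 0 \Bigr\},
\]
which, by the Poincar\'e inequality on the closed manifold $\bM$, is a reflexive Banach space under the norm $\|v\|_V := \|\rd v\|_{L^n(\bM)}$ equivalent to the restriction of the $W^{1,n}$-norm. Define
\[
L(\varphi) := \int_\bM \varphi\, \omega \qquad (\varphi \in W^{1,n}(\bM)).
\]
Continuity of $L$ follows from H\"older's inequality together with the Sobolev embedding $W^{1,n}(\bM) \hookrightarrow L^{p'}(\bM)$ valid on a closed $n$-manifold for any $p>1$ (with $p'$ the H\"older conjugate of $p$). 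Constants belong to the kernel of $L$ by \eqref{eq:weakly_exact}, so $L$ descends to a bounded linear functional on $V$.

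Next I would set up the operator $T\colon V \to V^*$ by
\[
\langle Tu, \varphi\rangle := \int_\bM \langle \cA(\rd u),\rd\varphi\rangle\,\vol_\bM
\]
and verify the Browder--Minty hypotheses. Condition (3) with $t = 0$ forces $\cA(0) = 0$; combined with (1) applied with $\zeta = 0$ this yields $\langle Tu,u\rangle \ge a\|\rd u\|_{L^n}^n$, hence coercivity on $V$. Condition (1) in full gives strict monotonicity of $T$, while condition (2) gives boundedness and continuity of $\cA$, and hence hemicontinuity of $T$. The Browder--Minty theorem then produces $u\in V$ with $Tu = L$ in $V^*$. Finally, decomposing an arbitrary $\varphi \in W^{1,n}(\bM)$ as $\tilde\varphi + c$ with $\tilde\varphi \in V$ and $c\in\R$ constant, and using $\rd\varphi = \rd\tilde\varphi$ together with $L(c) = 0$, the weak identity extends to all test functions in $W^{1,n}(\bM)$, which is the distributional form of $\rd^*\cA(\rd u) = \star\omega$.

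The main obstacle I anticipate is the careful bookkeeping around the quotient by constants. The compatibility condition \eqref{eq:weakly_exact} is indispensable here: without it the target functional $L$ would fail to vanish on the kernel $\R$ of the differential on the closed manifold, and no $W^{1,n}$-solution could exist. All the remaining ingredients---the Poincar\'e inequality on a closed Riemannian manifold, the Browder--Minty theorem, and the relevant Sobolev embedding---are standard in the non-linear potential theory of $\cA$-harmonic equations (cf.\ \cite{HeinonenJ:Nonptd}), and the argument goes through once the functional-analytic setup is in place.
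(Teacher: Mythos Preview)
Your proof is correct, but it takes a somewhat different route from the paper's. Both arguments ultimately rest on the Minty--Browder theorem and on the compatibility condition \eqref{eq:weakly_exact}, but the organizations diverge. The paper works at the level of differential forms: it first passes to a primitive $\tau \in W^{1,p}(\bigwedge^{n-1}\bM)$ with $\rd\tau = \omega$ (using \eqref{eq:weakly_exact} and the Poincar\'e inequality for forms from \cite{IwaniecT:Nonhtm}), then uses the Sobolev embedding $W^{1,p}\hookrightarrow L^{p^*}$ on $\tau$ to reduce to solving $\star\cA(\rd u)=\tau$, and finally invokes the Minty--Browder surjectivity for $\cA\colon L^n(\bigwedge^1\bM)\to L^{n/(n-1)}(\bigwedge^1\bM)$ as developed in the nonlinear Hodge framework of \cite{IwaniecT:Nonhtm, DOnofrioL:Notpha}. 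Your argument instead stays with the scalar weak formulation: you apply Browder--Minty directly to the operator $T\colon V\to V^*$ on the mean-zero quotient, using the Sobolev embedding $W^{1,n}(\bM)\hookrightarrow L^{p'}(\bM)$ for functions to see that $L$ is bounded. Your route is more self-contained and avoids the differential-form machinery; the paper's route, by contrast, fits naturally into the nonlinear Hodge theory referenced elsewhere and makes the role of $p>1$ via $p^*>n/(n-1)$ explicit.
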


\begin{proof}
By \eqref{eq:weakly_exact}, $\omega$ is weakly exact and there exists an $(n-1)$-form 
$\tau \in L^p(\bigwedge^{n-1}\bM)$ so that $\rd\tau = \omega$. Moreover, by the Poincar\'e inequality for differential forms, there exits a weakly closed $(n-1)$-form $\tau_0$ so that $\tau-\tau_0 \in W^{1,p}(\bigwedge^{n-1}\bM)$; see \cite[Theorem 6.4]{IwaniecT:Nonhtm}. Since $\rd(\tau-\tau_0) = \rd\tau = \omega$, we may assume that $\tau \in W^{1,p}(\bigwedge^{n-1} \bM)$.
Since 
\[
\rd^*\cA(\rd u) = \star \omega = \star \rd \tau,
\]
it suffices to show that, up to a sign, there exists $u$ so that 
\[
\star \cA(\rd u) = \tau.
\]
Since $p>1$, the Sobolev embedding theorem concludes $\tau \in L^{p^*}(\bigwedge^{n-1}\bM)$, where $p^* := np/(n-p)$. Since $p^* > n/(n-1)$ and $\cA \colon L^n(\bigwedge^1 \bM) \to L^{n/(n-1)}(\bigwedge^1 \bM)$ is surjective by the Minty-Browder theorem, there exists a solution $u \in W^{1,n}(\bM)$; see \cite[Section 8]{IwaniecT:Nonhtm} and, e.g., \cite[Sections 2.3 and 2.4]{DOnofrioL:Notpha} for details.
\end{proof}

\begin{proof}[Proof of Proposition $\ref{prop:standard_qr}$]
By change of variables,
\[
\int_{\bM}\frac{f^* \omega}{d} = \int_{\bM} \omega
\]
for each $\omega\in L^1(\bigwedge^n\bM)$; see e.g.\;\cite{MartioO:Lusin}. Thus, by Lemma \ref{lem:standard}, it suffices to show that for each $s>1$,
there exists $p=p(n,K,s)>1$ so that $f^*\omega \in L^s(\bigwedge^n \bM)$ if $\omega \in L^p(\bigwedge^n \bM)$; this is a direct consequence of the higher integrability of derivatives of quasiregular mappings.

Since $f$ is $K$-quasiregular, there exists $r=r(n,K)$ so that $f\in W^{1,r}(\bM,\bM)$; see e.g.\;\cite{MartioO:Onidq,MeyersN:Resrfs,IwaniecT:LpiPDE}. In particular, $|Df| \in L^r(\bM)$. Since $J_f \le |Df|^n$ almost everywhere, we have $J_f \in L^q(\bM)$, where $q=q(n,K):=r/n > 1$.

Thus for every $s>1$, there exists $p=p(q(n,K),s)>1$ so that $f^*\omega \in L^s(\bigwedge^n \bM)$ for all $\omega \in L^p(\bigwedge^n \bM)$. Indeed, fix $s>1$, and then there exists $\beta=\beta(n,K,s)\in(0,1)$ so that $\beta q = (s-\beta)(1-\beta)$. Let $\omega \in L^p(\bigwedge^n \bM)$, where $p:=s/\beta$. Then $\omega = u \vol_{\bM}$, where $u\in L^p(\bM)$. 
Since $f^*\omega=(u\circ f)J_f\vol_{\bM}$, we have
\begin{eqnarray*}
\int_{\bM} (u\circ f)^s J_f^s \vol_{\bM} &=& \int_{\bM} (u\circ f)^s J_f^\beta J_f^{s-\beta} \vol_{\bM} \\
&\le& \left( \int_{\bM} (u\circ f)^{s/\beta} J_f \vol_{\bM} \right)^{\beta} \left( \int_{\bM} J_f^{(s-\beta)\frac{1-\beta}{\beta}} \vol_{\bM}\right)^{\beta/(1-\beta)} \\
&\le& \left( d\int_{\bM} u^p \vol_{\bM} \right)^{\beta} \left( \int_{\bM} J_f^q \vol_{\bM}\right)^{\beta/(1-\beta)} <\infty.
\end{eqnarray*}
This completes the proof.
\end{proof}

\section{Invariant conformal structure}

One of our main tools is an invariant conformal structure for the given uniformly quasiregular 
endomorphism. In what follows we introduce some terminology and basic observations on measurable conformal structures. 

A \emph{measurable conformal structure} $G$ on $\bM$ is a bounded measurable mapping $G \colon \bM \to S(\bM)$, where $S(\bM)$ is the space of positive definite symmetric bundle self-map on $T\bM$ of determinant $1$.

We say that a continuous $W^{1,n}_\loc$-mapping $f \colon \bM\to\bM$ is a
\emph{$G$-transformation} 
if it satisfies the \emph{Beltrami equation}
\begin{equation}
\label{eq:Beltrami}
D^tf(x) G(f(x)) Df(x) = J^{2/n}_f(x) G(x)\quad \mathrm{a.e.}\;x\in \bM.
\end{equation}

The fundamental result of Iwaniec and Martin \cite{IwaniecT:Quas} states that every uniformly quasiregular endomorphism of $\bM$ is a $G$-transformation for some measurable conformal structure $G$ on $\bM$. More precisely, they proved the following.

\begin{theorem}[Iwaniec-Martin \cite{IwaniecT:Quas}]
\label{thm:IM}
Let $\Gamma$ be an abelian quasiregular semigroup of endomorphisms of
$\bM$. Then there exists a measurable conformal structure $G_\Gamma$
on $\bM$ so that all mappings in
$\Gamma$ are $G_\Gamma$-transformations.
\end{theorem}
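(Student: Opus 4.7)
The plan is to construct $G_\Gamma$ as a common fixed point of the pullback actions of the elements of $\Gamma$ on a suitable convex, weak-$\ast$ compact set of measurable symmetric tensor fields, via the Markov-Kakutani fixed-point theorem.

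First I would define the pullback $g^\ast G$ of a symmetric positive-definite bundle map $G$ under a $K$-quasiregular endomorphism $g$ by
\[
(g^\ast G)(x) = J_g(x)^{-2/n}\, Dg(x)^t\, G(g(x))\, Dg(x)
\]
at almost every $x\in \bM$ where $Dg$ exists and $J_g(x)>0$. The Beltrami equation~\eqref{eq:Beltrami} for $g$ with respect to $G$ is precisely the fixed-point identity $g^\ast G = G$. Two basic properties are needed: the chain rule $(g_1\circ g_2)^\ast = g_2^\ast \circ g_1^\ast$ (which, combined with commutativity, gives $g_1^\ast g_2^\ast = g_2^\ast g_1^\ast$ for $g_1,g_2\in\Gamma$), and the fact that if $G$ has pointwise eigenvalue ratio bounded by $K'$, then $g^\ast G$ has eigenvalue ratio bounded by $KK'$. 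Since $\Gamma$ is assumed to be a quasiregular semigroup, there is a common distortion bound $K^\ast<\infty$ for all elements of $\Gamma$.

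The principal difficulty is that the space of measurable conformal structures (symmetric positive-definite bundle maps of determinant one) is a nonlinear symmetric space, and $G\mapsto g^\ast G$ is not affine on it. I would get around this by working in the ambient linear space of bounded measurable symmetric tensor fields, dropping the determinant-one constraint, and realizing each $g^\ast$ as a genuinely linear operator there. The set
\[
\mathcal{C} = \{G \in L^\infty(\bM;\mathrm{Sym}(T^\ast \bM)) : G(x) \text{ positive definite with eigenvalue ratio} \le K^\ast \text{ a.e.}\}
\]
is convex, weak-$\ast$ closed and bounded, hence weak-$\ast$ compact by Banach-Alaoglu, and is preserved by every $g^\ast$ with $g\in\Gamma$. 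Weak-$\ast$ continuity of $g^\ast$ on $\mathcal{C}$ is the technical point: tested against a field in $L^1$, it reduces to the continuity of pre-composition by $g$ paired with $J_g^{-2/n} Dg^t(\cdot) Dg$, which is handled by the higher integrability of derivatives of quasiregular mappings combined with dominated convergence.

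Since $\{g^\ast : g\in \Gamma\}$ is a commuting family of weak-$\ast$ continuous affine self-maps of the compact convex set $\mathcal{C}$, the Markov-Kakutani fixed-point theorem produces a common fixed point $\widetilde G \in \mathcal{C}$. The identity
\[
\det (g^\ast \widetilde G)(x) = J_g(x)^{-2}\cdot J_g(x)^{2}\cdot \det \widetilde G(g(x)) = \det \widetilde G(g(x))
\]
shows that the scalar field $\det \widetilde G$ is $g$-invariant for every $g\in\Gamma$, so the normalization $G_\Gamma := (\det \widetilde G)^{-1/n}\, \widetilde G$ has determinant identically one and still satisfies $g^\ast G_\Gamma = G_\Gamma$, which is precisely the Beltrami equation~\eqref{eq:Beltrami}. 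This $G_\Gamma$ is the required invariant measurable conformal structure.
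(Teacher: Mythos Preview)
The paper does not give its own proof of this theorem; it cites Iwaniec--Martin \cite{IwaniecT:Quas} and only remarks that their argument on $\bS^n$ transfers to a general closed $\bM$ by replacing the canonical identification $T_x\bS^n\cong\R^n$ with pointwise orthonormal frames. So the relevant comparison is with the Iwaniec--Martin argument itself, which is \emph{not} a linear fixed-point argument: they work pointwise in the symmetric space $S(n)=\{A\in\mathrm{Sym}(n):A>0,\ \det A=1\}\cong SL(n,\R)/SO(n)$, which carries a complete metric of non-positive curvature, and exploit that each $g^\ast$ acts by isometries on $S(n)$ while the uniform distortion bound confines the orbit of $I$ to a bounded set. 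A common fixed point is then produced by a circumcenter/barycenter construction in this CAT(0) space.

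Your Markov--Kakutani scheme, as written, has a structural gap that cannot be patched without essentially reverting to the non-linear argument. The set
\[
\mathcal{C}=\{G\in L^\infty:\ G>0,\ \lambda_{\max}(G)/\lambda_{\min}(G)\le K^\ast\ \text{a.e.}\}
\]
is neither bounded nor weak-$\ast$ closed in $L^\infty$: the sequence $G_k=kI$ lies in $\mathcal{C}$ and escapes to infinity, while $G_k=\tfrac{1}{k}I$ converges weak-$\ast$ to $0\notin\mathcal{C}$. Thus Banach--Alaoglu does not give compactness. Moreover, $\mathcal{C}$ is not $g^\ast$-invariant: your own estimate (correctly) gives that if $G$ has eigenvalue ratio $\le K'$ then $g^\ast G$ has ratio $\le K^2K'$ (the linear distortion $\sigma_1/\sigma_n$ enters squared), so a single application of $g^\ast$ already throws you out of $\mathcal{C}$ whenever $K>1$. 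The underlying obstruction is that any normalization (such as $\det G\equiv 1$ or $\mathrm{tr}\,G\equiv n$) which would make the admissible set compact destroys the affineness of $G\mapsto g^\ast G$; conversely, keeping the action linear forces you to work in a cone with no invariant compact convex slice. This is precisely why Iwaniec and Martin pass to the non-positively curved geometry of $S(n)$ rather than to a linear space.
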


While Iwaniec and Martin prove the existence of an invariant conformal structure only for abelian quasiregular semigroups of $\bS^n$, their proof applies almost verbatim to all closed Riemannian manifolds $\bM$. Indeed, in the proof of \cite[Theorem 5.1]{IwaniecT:Quas}, instead of the canonical identification $T_x\bS^n = \R^n$ for $x\in \R^n$, where $\R^n \subset \hat \R^n = \bS^n$, it suffices to use point-wise isometries $T_x \bM \to \R^n$ given by some orthonormal basis of $T_x\bM$ at $x\in \bM$. We leave the details to the interested reader. 

Let $f\colon \bM\to\bM$ be a uniformly quasiregular endomorphism.
In what follows, we denote by $G=G_f$ 
a choice of a conformal structure $G_{\langle f \rangle}$ for $f$,
where $\langle f\rangle$ is the abelian semigroup generated by $f$.
Note that the conformal structure $G$ defines a measurable Riemannian metric
$\langle\cdot,\cdot\rangle_G$ on $\bM$ by 
\[
\langle v,w \rangle_G := \langle G(x)v , w\rangle
\]
for almost every $x \in \bM$ and all $v,w \in T_x \bM$. We denote by $|v|_G$ the norm of $v$ with respect to this inner product; we use the same notation for the induced inner products and norms for covectors. Furthermore, we denote 
\[
\norm{\xi}_{p,G} = \left( \int_{\bM} |\xi(x)|_G^p \vol_\bM(x) \right)^{1/p},
\]
where $\xi$ is a measurable form on $\bM$. Note that, since $\det G \equiv 1$, the measurable volume form $\vol_G$ for this Riemannian metric coincides with $\vol_\bM$ almost everywhere. We note also that, for $m$-forms $\xi$ on $\bM$, $\norm{\xi}_{n/m,G}$ is uniformly comparable to $\norm{\xi}_{n/m}$, since $\id \colon (\bM,\langle\cdot,\cdot\rangle_G) \to (\bM, \langle \cdot,\cdot \rangle)$ is quasiconformal.

The $G$-transformations $f^k$ are conformal mappings in the metric $\langle \cdot, \cdot \rangle_G$. Indeed, 
\begin{align*}
\langle Df(x)v, Df(x)w \rangle_G =& \langle G(f(x))Df(x)v, Df(x)w \rangle \\
=& \langle D^t f(x) G(f(x))Df(x)v,w\rangle \\
=& J_f^{2/n}(x) \langle G(x)v,w \rangle = J_f^{2/n}(x)\langle v,w\rangle_G
\end{align*}
for all $v,w\in T_x \bM$ and almost every $x\in \bM$.

The measurable Riemannian metric $\langle\cdot,\cdot\rangle_G$ defines an $n$-Laplace operator $\Delta_n=\Delta_{n,G}$ and, in what follows, we consider the equation $\Delta_n u = \star \mu$, where $\mu$ is a finite Borel measure on $\bM$, 
and the pull-back of this equation under a $G$-transformation. The weak form of the equation $\Delta_n u = \star \mu$ reads as 
\begin{equation}
\label{eq:Delta_weak}
\int_{\bM} \langle |\rd u|_G^{n-2}\rd u, \rd \psi \rangle_G \vol_\bM = \int_{\bM} \psi \dmu
\end{equation}
for all $\psi \in C^\infty(\bM)$, or by the density of $C^\infty(\bM)$ in $W^{1,n}(\bM)$, for all $\psi\in W^{1,n}(\bM)$.
We emphasize that the $n$-Laplace operator $\Delta_n$ has measurable coefficients and that the equation $\Delta_n u = \star \mu$ is, in fact, an $\cA$-harmonic equation $\rd^* \cA(\rd u) = \star \mu$ for the bundle map $\cA(x,\xi) = \langle G(x)\xi,\xi\rangle^{(n-2)/2} G(x)\xi$,  where $\xi\in T^*_x\bM$ and $x\in \bM$.

A standard argument for $\cA$-harmonic morphism property for quasiregular mappings shows that the equation $\Delta_n u = \star \mu$ is invariant under a $G$-transformation $f$, that is, $\Delta_n(u\circ f) = \star f^*\mu$ if $u$ is a solution to \eqref{eq:Delta_weak}: for, since $f$ is a $G$-transformation and $u$ is a solution to \eqref{eq:Delta_weak}, 
\begin{align*}
 \int_{\bM} \langle |\rd (u\circ f)|_G^{n-2} \rd (u\circ f), \rd\psi \rangle_G \vol_\bM
=& \int_{\bM} \langle |\rd u|_G^{n-2} \rd u, \rd (f_*\psi)\rangle_G \vol_\bM\\
=& \int_{\bM} f_*\psi \dmu = \int_{\bM} \psi \rd(f^*\mu)
\end{align*}
for all $u \in W^{1,n}(\bM)$ and all $\psi\in W^{1,n}(\bM)$; see \cite[p.\ 271]{HeinonenJ:Nonptd}.
We record this $G$-invariance in terms of $\cA$-harmonic equations.

\begin{lemma}
\label{lemma:A_f}
Let $f\colon \bM \to \bM$ be a uniformly quasiregular endomorphism. Then there exists a bundle map $\cA_f \colon T^*\bM \to T^*\bM$ so that for all $u\in W^{1,n}(\bM)$, 
\begin{gather*}
 f^* \rd^* \cA_f(\rd u) = \rd^* \cA_f(\rd f^*u)
\end{gather*}
as measures.
\end{lemma}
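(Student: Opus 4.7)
My plan is to take $\cA_f$ to be the bundle map associated with the Iwaniec--Martin invariant measurable conformal structure $G = G_f$ from Theorem \ref{thm:IM}, namely the map
\[
\cA_f(x,\xi) := \langle G(x)\xi,\xi\rangle^{(n-2)/2} G(x)\xi,
\]
so that $\rd^* \cA_f(\rd u) = \Delta_{n,G} u$ in the weak sense, as noted in the paragraph preceding the lemma. The first step is to verify the structural axioms (1)--(3) for $\cA_f$: axiom (3) is immediate from the definition, while (1) and (2) follow from boundedness of $G$ together with standard monotonicity/Lipschitz estimates for the model map $\eta \mapsto |\eta|^{n-2}\eta$ on an inner product space, the Euclidean ellipticity constants being controlled by the uniform $L^\infty$-bounds on $G$ that come from the quasiconformality constant of $\id \colon (\bM,\langle\cdot,\cdot\rangle) \to (\bM,\langle\cdot,\cdot\rangle_G)$.

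Next I would interpret the asserted identity in its weak form. For $u\in W^{1,n}(\bM)$, both $\rd^*\cA_f(\rd u)$ and $\rd^*\cA_f(\rd f^*u)$ are a priori distributions, defined by
\[
\int_{\bM} \psi \,\rd\bigl(\rd^*\cA_f(\rd v)\bigr) = \int_{\bM} \langle |\rd v|_G^{n-2} \rd v, \rd\psi\rangle_G \vol_\bM
\]
for $\psi\in W^{1,n}(\bM)$. Since $u\circ f\in W^{1,n}(\bM)$ (quasiregular maps preserve $W^{1,n}$) and $f_*\psi\in W^{1,n}(\bM)$ by \cite[Lemmas 14.30--14.31]{HeinonenJ:Nonptd}, both sides of the claimed identity pair legitimately with any test $\psi\in C^\infty(\bM)$. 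The identity then amounts to showing
\[
\int_{\bM} \langle |\rd u|_G^{n-2} \rd u, \rd f_*\psi\rangle_G \vol_\bM
= \int_{\bM} \langle |\rd (u\circ f)|_G^{n-2} \rd (u\circ f), \rd\psi\rangle_G \vol_\bM,
\]
since the left-hand side unfolds as $\int \psi\,\rd(f^*\rd^*\cA_f(\rd u))$ via the definition of the pull-back measure $f^*\mu = f_*\psi \mapsto \mu$, and the right-hand side unfolds as $\int \psi\,\rd(\rd^*\cA_f(\rd f^*u))$.

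The remaining equality is exactly the $\cA$-harmonic morphism computation displayed just before the lemma: it is obtained from the Beltrami equation \eqref{eq:Beltrami}, which gives $|f^*\alpha|_G^{\,2}(x) = J_f^{2/n}(x)|\alpha|_G^{\,2}(f(x))$ for $1$-forms $\alpha$, combined with change of variables for quasiregular mappings (the Jacobian factor $J_f$ absorbing into the volume form). Thus the lemma's proof is essentially a bookkeeping exercise that records this computation as an identity of distributions and names the underlying bundle map $\cA_f$. The only subtle point to guard against is that $\rd^*\cA_f(\rd u)$ need not be a Radon measure for arbitrary $u\in W^{1,n}(\bM)$, so the identity should be read distributionally; it becomes an honest identity of Radon measures in the situations of interest in later sections (e.g.\ when $u$ is an $\cA$-harmonic potential of a finite measure as in Proposition \ref{prop:standard_qr}), where standard Riesz-type arguments turn the distribution into a signed measure.
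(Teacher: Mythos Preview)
Your proposal is correct and follows exactly the paper's approach: the paper defines $\cA_f$ via the Iwaniec--Martin structure $G$ and derives the identity from the $\cA$-harmonic morphism computation displayed immediately before the lemma, then simply states the lemma as a ``recording'' of that $G$-invariance without a separate proof. Your write-up is in fact more detailed than the paper's, which omits the verification of the structural axioms and the distributional caveat you raise.
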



\section{A construction of equilibrium measure}\label{sec:construct}

Let $f\colon \bM \to \bM$ be a uniformly $K$-quasiregular endomorphism of degree $d>1$. Let $G$ be an invariant conformal structure for $f$ and let $\Delta_n$ be the associated measurable $n$-Laplace operator. Let $\cA=\cA_f$ be the bundle map determined by $\Delta_n$.

\begin{lemma}\label{lem:Sobolev}
For every $u\in W^{1,n}(\bM)$ and every $\psi\in W^{1,n}(\bM)$,
\begin{gather}
\label{eq:degree_control}
\left|\int_{\bM} \psi\star \rd^* \cA(\rd(f^*u))\right| \le
\norm{\rd\psi}_{n,G} \norm{\rd u}_{n,G}^{n-1}d^{(n-1)/n}.
\end{gather}
\end{lemma}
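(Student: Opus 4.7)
The plan is to rewrite the left-hand side as an $L^n$--$L^{n/(n-1)}$ pairing via the weak form of the $\cA$-harmonic equation, apply H\"older's inequality, and then convert the $L^n$-norm in the $G$-metric of $\rd(f^*u)$ back to that of $\rd u$ using the $G$-conformality of $f$ and the degree-$d$ change of variables for quasiregular mappings. Working throughout in the $G$-metric removes all quasiconformal distortion factors, and the whole estimate becomes sharp point-wise on the integrand side.

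First, I would apply the integration-by-parts identity built into the weak form of $\Delta_{n,G}$, that is,
\[
\int_\bM \psi \star \rd^* \cA(\rd v) = \int_\bM \langle \cA(\rd v), \rd\psi\rangle_G \vol_\bM,
\]
with $v = f^*u$. The pointwise identity $|\cA(\xi)|_G = |\xi|_G^{n-1}$, which is immediate from $\cA(\xi) = |\xi|_G^{n-2} G \xi$, combined with Cauchy-Schwarz pointwise and H\"older with exponents $n/(n-1)$ and $n$, yields
\[
\left|\int_\bM \psi \star \rd^* \cA(\rd(f^*u))\right| \le \norm{\rd(f^*u)}_{n,G}^{n-1}\, \norm{\rd\psi}_{n,G}.
\]

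Second, I would invoke the Beltrami equation. Since $f$ is a $G$-transformation, $|Df(x)v|_G = J_f(x)^{1/n} |v|_G$ a.e., and dualizing this to the cotangent bundle gives $|f^*\eta|_G(x) = J_f(x)^{1/n} |\eta|_G(f(x))$ for 1-forms $\eta$. Applied to $\eta = \rd u$ and raised to the $n$-th power, this produces the pointwise identity
\[
|\rd(f^*u)|_G^n = J_f \cdot (|\rd u|_G^n \circ f) \quad \text{a.e.}
\]
The standard change of variables for a degree-$d$ quasiregular endomorphism,
\[
\int_\bM J_f\, (g\circ f)\, \vol_\bM = d \int_\bM g\, \vol_\bM,
\]
applied to the nonnegative function $g = |\rd u|_G^n$, then gives $\norm{\rd(f^*u)}_{n,G}^n = d\, \norm{\rd u}_{n,G}^n$, whence $\norm{\rd(f^*u)}_{n,G}^{n-1} = d^{(n-1)/n}\, \norm{\rd u}_{n,G}^{n-1}$. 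Substituting into the bound from the previous paragraph yields the claimed inequality.

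The only point requiring some care is the dualization of the Beltrami equation from tangent vectors to covectors and the book-keeping of $G$-norms versus the background Riemannian norms; but since $\det G \equiv 1$ one has $\vol_G = \vol_\bM$, so $\norm{\cdot}_{n,G}$ uses the same volume form as the chosen background, and the quasiregular change of variables applies without any correction. All remaining steps are routine $L^p$-duality and the standard conformally covariant structure of the $n$-Laplace operator in the $G$-metric.
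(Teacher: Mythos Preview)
Your argument is correct and essentially identical to the paper's: both pass to the weak form, bound the integrand pointwise by $|\rd(u\circ f)|_G^{n-1}|\rd\psi|_G$, apply H\"older with exponents $n$ and $n/(n-1)$, and then use the $G$-conformality identity $|\rd(u\circ f)|_G^n=(|\rd u|_G^n\circ f)J_f$ together with the degree-$d$ change of variables to obtain $\norm{\rd(f^*u)}_{n,G}^n=d\,\norm{\rd u}_{n,G}^n$. One cosmetic point: after integration by parts the integrand is $\langle \cA(\rd v),\rd\psi\rangle$ in the background pairing, equivalently $\langle |\rd v|_G^{n-2}\rd v,\rd\psi\rangle_G$, rather than $\langle \cA(\rd v),\rd\psi\rangle_G$; this does not affect the pointwise bound you use.
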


\begin{proof}
Since $f$ is a $G$-transformation, 
\[
\int_{\bM} \psi\star \rd^* \cA(\rd(f^*u)) = \int_{\bM} \langle |\rd (u\circ f)|^{n-2}_G \rd (u\circ f),\rd\psi \rangle  \vol_\bM
\]
for $u\in W^{1,n}(\bM)$ and $\psi\in W^{1,n}(\bM)$. Thus, 
\begin{align*}
 \left|\int_{\bM} \psi\star \rd^* \cA(\rd(f^*u))\right| 
 &\le \int_{\bM} |\rd(u\circ f)|_G^{n-1} |\rd\psi|_G \\
 &\le \norm{\rd\psi}_{n,G} \left( \int_{\bM} |\rd(u\circ f)|_G^n\right)^{(n-1)/n} \\
 &\le \norm{\rd\psi}_{n,G} \left( \int_{\bM} (|\rd u|_G^n\circ f) J_f \right)^{(n-1)/n} \\
 &= \norm{\rd\psi}_{n,G} \norm{\rd u}^{n-1}_{n,G}d^{(n-1)/n}
\end{align*}
by H\"older's inequality and the change of variables.
\end{proof}

\begin{theorem}\label{thm:limits_Lp}
There exists a probability measure $\mu_f$ on $\bM$ such that
\begin{gather}
 \frac{(f^k)^*\omega}{d^k} \weakto \mu_f\label{eq:equilibrium}
\end{gather}
as $k \to \infty$ whenever $\omega\in L^p(\bigwedge^n \bM)$ is an $n$-form so that $\int_{\bM} \omega = 1$.
Here $p=p(n,K)>1$ is as in Proposition $\ref{prop:standard_qr}$.
\end{theorem}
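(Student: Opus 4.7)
The plan is to fix $\omega \in L^p(\bigwedge^n \bM)$ with $\int_\bM \omega = 1$, set $\nu_k := (f^k)^*\omega/d^k$, and show that $\int_\bM \psi\, \rd\nu_k$ converges as $k\to\infty$ for every continuous $\psi$, with a limit independent of $\omega$. The telescoping increment is $\nu_{k+1}-\nu_k = (f^k)^*\eta/d^k$, where $\eta := f^*\omega/d-\omega$ lies in $L^p(\bigwedge^n \bM)$ and has vanishing integral. Proposition \ref{prop:standard_qr} applied to $\omega$ then supplies $u \in W^{1,n}(\bM)$ with $\rd^*\cA(\rd u) = \star \eta$, which encodes the increment $\eta$ as the $\cA$-harmonic measure of the potential $u$.

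Next I would iterate Lemma \ref{lemma:A_f} to rewrite $(f^k)^*\eta = \star\rd^*\cA(\rd((f^k)^*u))$, using that $f^k$ is itself a $G$-transformation (the Iwaniec--Martin structure $G = G_f$ is invariant under the whole semigroup $\langle f\rangle$) and is uniformly $K$-quasiregular of degree $d^k$. Applying Lemma \ref{lem:Sobolev} with $f^k$ in place of $f$ and with a test $\psi \in W^{1,n}(\bM)$ would then yield $\left| \int_\bM \psi\,(\nu_{k+1}-\nu_k) \right| \le \norm{\rd\psi}_{n,G}\norm{\rd u}_{n,G}^{n-1}\, d^{-k/n}$, because the raw bound carries a factor $(d^k)^{(n-1)/n}$ which, after division by $d^k$, collapses to $d^{-k/n}$. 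Geometric summability in $k$ implies that $\{\int_\bM \psi\, \rd\nu_k\}_k$ is Cauchy, hence convergent, for every $\psi \in W^{1,n}(\bM)$. Combined with the uniform bound $\norm{\nu_k}_{L^1}=\norm{\omega}_{L^1}$ (via the change-of-variables identity $\int_\bM (g\circ f^k)J_{f^k} = d^k\int_\bM g$) and the sup-norm density of $C^\infty \subset W^{1,n}(\bM)$ in $C^0(\bM)$, this upgrades to convergence for all $\psi\in C^0(\bM)$, so by Riesz representation there is a signed Borel measure $\mu_f^\omega$ with $\nu_k \weakto \mu_f^\omega$.

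To conclude, I would specialise to the positive reference form $\omega_0 := \vol_\bM/\vol_\bM(\bM)$: then each $(f^k)^*\omega_0/d^k$ is non-negative with integral $1$, so $\mu_f := \mu_f^{\omega_0}$ is automatically a probability measure. Independence of $\mu_f^\omega$ from $\omega$ follows by running the same telescoping argument on $\omega-\omega'$ in place of $\eta$: since $\int_\bM(\omega-\omega')=0$, Lemma \ref{lem:standard} produces $v\in W^{1,n}(\bM)$ with $\rd^*\cA(\rd v)=\star(\omega-\omega')$, and the Lemma \ref{lemma:A_f}--Lemma \ref{lem:Sobolev} chain yields $\left|\int_\bM\psi\, (f^k)^*(\omega-\omega')/d^k\right| = O(d^{-k/n})$, so that $\mu_f^\omega = \mu_f$. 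I expect the principal obstacle to be the careful bookkeeping between the Hodge-star conventions for $n$-forms and for functions when iterating Lemma \ref{lemma:A_f}, and verifying that the degree scaling in Lemma \ref{lem:Sobolev} applied to $f^k$ gives exactly the factor $(d^k)^{(n-1)/n}$ that produces the decisive negative exponent $-k/n$ upon dividing by $d^k$; everything else is standard density/compactness.
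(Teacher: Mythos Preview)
Your proposal is correct and follows essentially the same route as the paper: solve \eqref{eq:A_harm_omega} via Proposition~\ref{prop:standard_qr}, combine Lemma~\ref{lemma:A_f} with Lemma~\ref{lem:Sobolev} applied to $f^k$ to obtain the $d^{-k/n}$ bound on the telescoping increments, sum geometrically for $\psi\in C^\infty(\bM)$, and then use density in $C^0(\bM)$; independence of $\omega$ is handled by the same estimate applied to $\omega-\omega'$. One inconsequential slip: $\eta=f^*\omega/d-\omega$ need not lie in $L^p$ itself (only in some $L^s$ with $s>1$ by higher integrability), but this is irrelevant since you correctly invoke Proposition~\ref{prop:standard_qr} on $\omega$ rather than Lemma~\ref{lem:standard} on $\eta$.
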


\begin{proof}
By Proposition \ref{prop:standard_qr}, there exists a solution $u\in W^{1,n}(\bM)$ to the equation
\begin{gather}
 \rd^*\cA(\rd u) = \star\left(\frac{f^*\omega}{d}-\omega\right).\label{eq:potential}
\end{gather}
Let $\psi \in C^{\infty}(\bM)$. By Lemmas \ref{lem:Sobolev} and \ref{lemma:A_f}, 
\begin{align*}
\left|\int_{\bM}\psi
 \rd\left(\frac{(f^{k+1})^*\omega}{d^{k+1}}-\frac{(f^k)^*\omega}{d^k}\right)\right| 
=&\frac{1}{d^k} \left|\int_{\bM}\psi \rd(f^k)^*\left(\frac{f^*\omega}{d}-\omega\right)\right| \\
=&\frac{1}{d^k} \left|\int_{\bM}\psi\star \rd^* \cA(\rd((f^k)^*u))\right|\\
\le&\norm{\rd\psi}_{n,G} \norm{\rd u}^{n-1}_{n,G}(d^k)^{(n-1)/n-1}\\
\le&\norm{\rd\psi}_{n,G} \norm{\rd u}^{n-1}_{n,G}d^{-k/n}
\end{align*}
for every $k\in \N$. Since $C^\infty(\bM)$ is dense in $C^0(\bM)$, the weak limit 
\begin{gather*}
 \mu:=\lim_{k\to\infty}\frac{(f^k)^*\omega}{d^k}
\end{gather*}
exists and satisfies
\begin{gather}
\label{eq:mu_limit}
 \left|\int_{\bM}\psi
 \rd\left(\mu-\frac{(f^k)^*\omega}{d^k}\right)\right|
 \le \norm{\rd\psi}_{n,G} \norm{\rd u}^{n-1}_{n,G}\frac{d^{-k/n}}{1-d^{-1/n}}
\end{gather} 
for all $\psi \in C^\infty(\bM)$ and for all $k\in \N$.

We show now that the measure $\mu$ does not depend on the choice of 
$\omega\in L^p(\bigwedge^n \bM)$. 
Let $\omega$ and $\omega'$ be $n$-forms in $L^p(\bigwedge^n \bM)$ so that 
\[
\int_{\bM} \omega = \int_{\bM} \omega' = 1.
\]
Then there exists a solution $u\in W^{1,n}(\bM)$ to the equation
\[
\rd^*\cA(\rd u) = \star\left(\omega'-\omega\right)
\]
and, by the same computation as the above, for all $\psi \in C^\infty(\bM)$,
\begin{gather*}
 \left|\int_{\bM}\psi \rd\left(\frac{(f^k)^*\omega'}{d^k}-\frac{(f^k)^*\omega}{d^k}\right)\right| 
 \le \norm{\rd\psi}_{n,G} \norm{\rd u}^{n-1}_{n,G}d^{-k/n} \to 0
\end{gather*}
as $k\to\infty$. Since $C^{\infty}(\bM)$ is dense in $C^0(\bM)$, the proof is complete.
\end{proof}

\section{Proof of Theorem \ref{thm:properties}}

For the proof of Theorem \ref{thm:properties} we need a variant of Rickman's Montel theorem (\cite[Corollary IV.3.14]{RickmanS:Quam}) in our setting; the argument is a well-known combination of a version of Rickman's Picard theorem and Zalcman's lemma; see \cite{HinkkanenA:Locuqr}. 

We denote by $B^n(r)$ the open ball in $\mathbb{R}^n$ of radius $r>0$ and around the origin.

\begin{proposition}[Rickman's Montel theorem]\label{prop:RM}
There exists $q=q(n,K)\in\mathbb{N}$ so that for every distinct $a_1,\ldots, a_q\in\bM$,
any family $\{F_j;j\in\mathbb{N}\}$ of $K$-quasiregular mappings $F_j\colon B^n\to\bM$ omitting $a_1,\ldots,a_q$ is normal.
\end{proposition}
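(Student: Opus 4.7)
The plan is to argue by contradiction, using a Zalcman-type rescaling to reduce normality to a Picard-type statement for entire $K$-quasiregular mappings into the closed target $\bM$. I would choose $q=q(n,K)\in\mathbb{N}$ to be one more than the largest number of points a non-constant $K$-quasiregular mapping $\mathbb{R}^n\to\bM$ can omit; the existence of such a Picard bound is the main non-elementary ingredient, guaranteed by Rickman-type results for quasiregular mappings into closed Riemannian manifolds (with the dependence on $\bM$ absorbed into $q(n,K)$, since $\bM$ is fixed throughout).

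Assume for contradiction that the conclusion fails for this $q$, so there is a family $\{F_j\}$ of $K$-quasiregular mappings $F_j\colon B^n\to\bM$, each omitting distinct $a_1,\ldots,a_q\in\bM$, that is not normal at some $x_0\in B^n$. I would then apply a version of Zalcman's rescaling lemma adapted to $\bM$-valued $K$-quasiregular mappings (using the intrinsic distance on $\bM$ to define the spherical derivative, as in Hinkkanen's article): after passing to a subsequence, one obtains points $x_j\to x_0$ in $B^n$ and scales $\rho_j\to 0^+$ such that the rescalings
\[
G_j(y):=F_j(x_j+\rho_j y),
\]
defined on balls exhausting $\mathbb{R}^n$, converge locally uniformly on $\mathbb{R}^n$ to a non-constant mapping $G\colon\mathbb{R}^n\to\bM$.

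By Reshetnyak's convergence theorem, the locally uniform limit of $K$-quasiregular mappings with a common distortion constant $K$ is itself $K$-quasiregular; hence $G$ is a non-constant $K$-quasiregular entire mapping into $\bM$. Each $G_j$ omits $a_1,\ldots,a_q$, and by the quasiregular Hurwitz theorem, for each $i$ either $G$ omits $a_i$ or $G\equiv a_i$. Non-constancy of $G$ rules out the latter, so $G$ omits all $q$ points $a_1,\ldots,a_q$. This contradicts the choice of $q$, completing the argument.

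The main obstacle is setting up the Zalcman rescaling correctly in the $\bM$-valued setting: one has to choose the spherical derivative with respect to the Riemannian distance on $\bM$, work in local charts both near $x_0\in B^n$ and near any accumulation point in $\bM$, and verify that the resulting limit is genuinely non-constant and is $K$-quasiregular on all of $\mathbb{R}^n$. A secondary subtlety is to confirm that the Picard-type constant $q(n,K)$ is available for $\bM$-valued quasiregular entire mappings (which, for $\bM=\bS^n$, is classical Rickman's Picard theorem, and extends to general closed Riemannian $\bM$ via the literature cited through Hinkkanen); once both of these are in hand, the combination is standard.
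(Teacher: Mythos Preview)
Your proposal is correct and follows essentially the same approach as the paper: both fix $q$ via a Picard-type bound for entire $K$-quasiregular maps into $\bM$ (the paper cites Holopainen--Rickman), then argue by contradiction using Zalcman's rescaling to produce a non-constant entire $K$-quasiregular limit, and finally invoke a Hurwitz-type argument to conclude the limit still omits all $q$ points. The only cosmetic difference is that the paper spells out the Hurwitz step via local degree theory, whereas you cite it as a known result.
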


\begin{proof}
By a result of Holopainen and Rickman \cite{HolopainenI:Ricchf}, 
we may fix $q=q(n,K)$ so that every $K$-quasiregular mapping $\R^n \to \bM$ omitting $q$ points 
is constant. 

Suppose now that $E=\{a_1,\ldots, a_q\}$ is a set of $q$ distinct
points on $\bM$ and suppose that 
a family $\{F_j\}$ of $K$-quasiregular mappings $B^n\to \bM\setminus E$ is not normal. 
Then, changing the affine coordinate if necessary,
by Zalcman's lemma, there are a sequence $(F_i)\subset\{F_j\}$,
a sequence $\rho_i\searrow 0$ and a sequence of points $x_i\in B^n(1/2)$ 
tending to the origin such that $(F_i)$ does not converge locally uniformly but
that the sequence of mappings 
\begin{gather*}
 \hat{F}_i(x): = F_i(x_i + \rho_i x)
\end{gather*}
from $B^n((1/2)/\rho_i)$ to $\bM$
converges to a non-constant $K$-quasiregular mapping 
$\hat{F}\colon \R^n \to \bM$ locally uniformly; 
see \cite[Section 3]{HinkkanenA:Locuqr} or \cite[Section 2]{BonkM:Quamc}. 

Since all $\hat{F}_i$ omit the set $E$, we have a Hurwitz type result that $\hat{F}$ also omits the set $E$: for, if this is not the case, then there exist $x_0 \in \hat{F}^{-1}(E)$ and a normal neighborhood $\Omega$ of $x_0$. Since $\hat{F}_i|\overline{\Omega} \to \hat{F}|\overline{\Omega}$ uniformly as $i \to \infty$, 
\[
i(x_0,\hat{F}) = \deg(\hat{F}(x_0),\hat{F}, \Omega) = \lim_{i \to \infty} \deg(\hat{F}(x_0),\hat{F}_i,\Omega) = 0,
\]
which is a contradiction; see \cite[Section I.4]{RickmanS:Quam} for notation and terminology.

Thus $\hat{F}$ is constant by the choice of $q$. This is a contradiction. 
\end{proof}

Let $\mathcal{E}(f)$ the subset of all points in $\bM$ having a finite backward orbit under $f$, that is, 
\[
\mathcal{E}(f) = \{x \in \bM ; \# \bigcup_{k\ge 0} f^{-k}(x) <\infty\}.
\]
Then any $a\in\mathcal{E}(f)$ is periodic under $f$ and satisfies
$i(a,f)=d$. By an estimate from \cite[Theorem III.4.7]{RickmanS:Quam},
any $a\in\mathcal{E}(f)$ is a superattracting periodic point of $f$, and in particular,
\begin{gather*}
 \mathcal{E}(f)\cap\mathcal{J}(f)=\emptyset;
\end{gather*}
see \cite[p.83]{HinkkanenA:Locuqr}.
Since the following expansion property of $\mathcal{J}(f)$ holds,
by Rickman's Montel theorem, $\mathcal{E}(f)$ 
has the cardinality at most $q=q(n,K)$ in Proposition \ref{prop:RM}.

\begin{proposition}\label{prop:HMM}
Let $f\colon \bM \to \bM$ be a uniformly quasiregular endomorphism. Then 
for every $x_0\in\mathcal{J}(f)$ and
every neighborhood $\Omega$ of $x_0$ $($in $\bM)$ small enough,
\begin{gather*}
 \bigcup_{k\in\mathbb{N}}f^k(\Omega)=\bM\setminus\mathcal{E}(f).
\end{gather*}
\end{proposition}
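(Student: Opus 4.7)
The plan is a standard dynamical argument that combines Rickman's Montel theorem (Proposition~\ref{prop:RM}) with the complete invariance of the exceptional set $\mathcal{E}(f)$ and the failure of normality of $\{f^k\}$ at points of $\mathcal{J}(f)$.

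I would first record two preliminary facts. Firstly, $\mathcal{E}(f)$ is completely $f$-invariant: backward invariance is immediate from the definition, and forward invariance follows from the observation already noted in the excerpt that each $a\in \mathcal{E}(f)$ is totally branched with $i(a,f)=d$, hence $f^{-1}(f(a))=\{a\}$, so the backward orbit of $f(a)$ reduces to the periodic cycle through $a$ and is finite. Secondly, since $\mathcal{E}(f)\cap \mathcal{J}(f)=\emptyset$ and $\mathcal{E}(f)$ has cardinality at most $q=q(n,K)$, I may shrink $\Omega$ about $x_0$ to a coordinate ball equipped with a quasiconformal parametrization $\phi\colon B^n\to \Omega$ and satisfying $\overline{\Omega}\cap \mathcal{E}(f)=\emptyset$.

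With this choice, the inclusion $\bigcup_{k}f^k(\Omega)\subset \bM\setminus \mathcal{E}(f)$ is immediate: if $f^k(x)\in \mathcal{E}(f)$ for some $x\in \Omega$, then backward invariance forces $x\in f^{-k}(\mathcal{E}(f))\subset \mathcal{E}(f)$, contradicting $\Omega\cap \mathcal{E}(f)=\emptyset$.

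For the reverse inclusion I would set $E_\Omega := \bM\setminus \bigcup_{k}f^k(\Omega)$ and argue in two steps. To show $\# E_\Omega < q$, suppose for contradiction that $a_1,\dots,a_q \in E_\Omega$ are distinct. Then the compositions $f^k\circ \phi\colon B^n \to \bM$ form a family all omitting $\{a_1,\dots,a_q\}$ and uniformly $K'$-quasiregular, with $K'$ depending only on $K$ and the distortion of $\phi$. Proposition~\ref{prop:RM} then yields normality of $\{f^k\circ \phi\}$ on $B^n$, hence of $\{f^k|_\Omega\}$ on $\Omega$, contradicting $x_0\in \mathcal{J}(f)$. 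Next I would observe that $E_\Omega$ is backward invariant: if $y\in E_\Omega$ and some $z\in f^{-1}(y)$ were to lie in $f^k(\Omega)$, then $y\in f^{k+1}(\Omega)$, a contradiction. Consequently the entire backward orbit of any $y\in E_\Omega$ is contained in the finite set $E_\Omega$, so $y\in \mathcal{E}(f)$, and $E_\Omega\subset \mathcal{E}(f)$ as required.

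The only delicate point is the application of Rickman's Montel theorem: it is stated for mappings from $B^n$ rather than from a Riemannian ball $\Omega$, so one has to pass through the coordinate chart $\phi$ and verify that the pulled-back family $\{f^k\circ\phi\}$ has a single distortion constant independent of $k$. This follows from the uniform $K$-quasiregularity of $\{f^k\}$ together with the quasiconformality of $\phi$ by the standard chain-rule computation for composition, but it is the only step that is not purely formal; everything else is routine completely-invariant-set bookkeeping.
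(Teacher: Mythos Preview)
Your proof is correct and follows essentially the same route as the paper: use Rickman's Montel theorem to force the omitted set $E_\Omega$ to be finite, observe its backward invariance to get $E_\Omega\subset\mathcal{E}(f)$, and use $f^{-1}(\mathcal{E}(f))\subset\mathcal{E}(f)$ together with $\Omega\cap\mathcal{E}(f)=\emptyset$ for the reverse inclusion. If anything, you are slightly more careful than the paper---you make the passage through a coordinate chart $\phi\colon B^n\to\Omega$ explicit (the paper's Proposition~\ref{prop:RM} is stated only for maps from $B^n$), and you rely only on backward invariance of $E_\Omega$, whereas the paper asserts complete invariance of $E_\Omega$ even though forward invariance is neither obvious nor needed; your preliminary remark on forward invariance of $\mathcal{E}(f)$ is likewise not actually used.
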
  

\begin{proof}
 For each $x_0\in\mathcal{J}(f)$ and
 each neighborhood $\Omega$ of $x_0$,
 the subset $\bM\setminus\bigcup_{k\in\mathbb{N}}f^k(\Omega)$
 is completely invariant under $f$, and is (possibly empty and) finite
 by Rickman's Montel theorem.
 Hence $\bM\setminus\bigcup_{k\in\mathbb{N}}f^k(\Omega)\subset\mathcal{E}(f)$,
 that is, $\bigcup_{k\in\mathbb{N}}f^k(\Omega)\supset\bM\setminus\mathcal{E}(f)$.

 If there is $x_0\in\mathcal{J}(f)$ such that
 any neighborhood $\Omega$ of $x_0$ satisfies
 $\bigcup_{k\in\mathbb{N}}f^k(\Omega)\not\subset\bM\setminus\mathcal{E}(f)$, then
 from $f^{-1}(\mathcal{E}(f))\subset\mathcal{E}(f)$, 
 we have $x_0\in\overline{\mathcal{E}(f)}=\mathcal{E}(f)$. This contradicts
 $\mathcal{E}(f)\cap\mathcal{J}(f)=\emptyset$. 
\end{proof}

Having Proposition \ref{prop:HMM} at our disposal, 
we close this section with a proof of Theorem \ref{thm:properties}. 

\begin{proof}[Proof of Theorem $\ref{thm:properties}$]
Let $\omega\in C^{\infty}(\bigwedge^n\bM)$. 
Let $B=B(x,3r)$ be an open ball in $\bM\setminus\mathcal{J}(f)$.
Then $\{f^k|B;k\in\mathbb{N}\}$ is normal, and by passing to a subsequence, 
we may assume that $(f^k|B)$ converges locally uniformly to a $K$-quasiregular mapping 
$F\colon B \to \bM$. Since
$\int_{B(x,2r)} F^*\omega < \infty$, by \cite[Lemma VI.8.8]{RickmanS:Quam}, 
\[
\mu_f(B(x,r)) = \lim_{k \to \infty} \frac{1}{d^k} \int_{B(x,r)} (f^k)^*\omega \le \lim_{k \to \infty} \frac{1}{d^k} \int_{B(x,2r)} F^*\omega = 0.
\]
Thus $\spt \mu_f \subset \mathcal{J}(f)$. 
In particular, by $\mathcal{E}(f)\cap\mathcal{J}(f)=\emptyset$,
we can choose some $a\in\spt\mu_f\setminus\mathcal{E}(f)$. 
By $f^{-1}(\spt\mu_f)\subset\spt\mu_f$, Proposition \ref{prop:HMM}
also concludes that $\mathcal{J}(f)\subset\overline{\{f^{-k}(a);k\in\mathbb{N}\}}\subset\spt\mu_f$.

Suppose that $\mu_f$ has an atom $a\in\bM$. 
Then $a\in\spt\mu_f=\mathcal{J}(f)$. Since $\mu_f$ is invariant under $f$, 
\begin{gather*}
 \mu_f(\{f(a)\})=(f_*\mu_f)(\{f(a)\})
=\mu_f(f^{-1}(f(a)))\ge\mu_f(\{a\}),
\end{gather*}
so $\#\{f^k(a);k\in\mathbb{N}\}<\infty$ by $\mu_f(\bM)=1$.
Without loss of generality, we may assume that $f(a)=a$. 
Then since $\mu_f$ is balanced under $f$,
\begin{gather*}
\mu_f(\{a\})=\frac{f^*\mu_f}{d}(\{a\})=\frac{i(a,f)\mu_f(\{a\})}{d},
\end{gather*}
which with $\mu_f(\{a\})>0$ implies $i(a,f)=d$. Hence $f^{-1}(a)=\{a\}$,
so $a\in\mathcal{E}(f)$. This contradicts 
$\mathcal{E}(f)\cap\mathcal{J}(f)=\emptyset$.
\end{proof}


\section{Approximation of the equilibrium measure}

Let $f\colon \bM \to \bM$ be a uniformly $K$-quasiregular endomorphism of degree $d>1$. 

\begin{theorem}\label{th:equidist}
The subset
\begin{gather*}
 E(f):=\left\{a\in\bM;\frac{(f^k)^*\delta_a}{d^k}\not\weakto\mu_f\text{ as }k\to\infty\right\}
\end{gather*}
is of Hausdorff dimension at most $n-1$. 
\end{theorem}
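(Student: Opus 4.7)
The plan is to extend the $\cA$-harmonic potential argument of Theorem~\ref{thm:limits_Lp} from $L^p$ test forms to Dirac masses $\delta_a$, paying for this extension with a small exceptional set that can be controlled via Riesz 1-capacity. The Hausdorff dimension bound then follows from the classical Frostman-type fact that a set of vanishing Riesz 1-capacity has Hausdorff dimension at most $n-1$; see \cite[Chapter~II]{LandkofN:Foumpt}.

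First, fix a smooth probability $n$-form $\omega_0$ on $\bM$, and for each $a\in\bM$ and $\epsilon\in(0,1/2)$ a smooth probability mollification $\omega_a^\epsilon$ of $\delta_a$ supported in $B(a,\epsilon)$. By Proposition~\ref{prop:standard_qr} there exists $u_a^\epsilon\in W^{1,n}(\bM)$ solving $\rd^*\cA(\rd u_a^\epsilon)=\star(\omega_a^\epsilon-\omega_0)$, and the logarithmic fundamental-solution behaviour of the $n$-Laplace-type operator $\rd^*\cA$, combined with standard $\cA$-harmonic comparison principles (cf.~\cite{HeinonenJ:Nonptd}), yields an $a$-uniform bound $\norm{\rd u_a^\epsilon}_{n,G}\le C(\log(1/\epsilon))^{1/n}$. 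Running the telescoping computation from the proof of Theorem~\ref{thm:limits_Lp} with $\omega=\omega_a^\epsilon$ then gives, for every $\psi\in C^\infty(\bM)$,
\begin{gather*}
\left|\int_\bM\psi\,\rd\!\left(\frac{(f^k)^*\omega_a^\epsilon}{d^k}-\mu_f\right)\right|\le C_\psi(\log(1/\epsilon))^{(n-1)/n}d^{-k/n},
\end{gather*}
so the choice $\epsilon=\epsilon_k:=d^{-k}$ makes the right-hand side vanish as $k\to\infty$ uniformly in $a$.

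Writing $v_k:=(f^k)_*\psi/d^k$, the remaining error decomposes as
\begin{gather*}
\left|\int_\bM\psi\,\rd\!\left(\frac{(f^k)^*(\delta_a-\omega_a^{\epsilon_k})}{d^k}\right)\right|=\left|v_k(a)-\int_\bM v_k\omega_a^{\epsilon_k}\vol_\bM\right|\le\operatorname{osc}_{B(a,\epsilon_k)}v_k,
\end{gather*}
so the task reduces to showing that $\operatorname{osc}_{B(a,\epsilon_k)}v_k\to 0$ for all $a$ outside a set of vanishing Riesz 1-capacity. A $G$-conformal change of variables in the spirit of Lemma~\ref{lem:Sobolev} yields $\norm{v_k}_\infty\le\norm\psi_\infty$ together with the much stronger bound $\norm{\rd v_k}_{n,G}\le C_\psi d^{-k/n}$ decaying exponentially in $k$. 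Combining this with the pointwise Riesz-potential estimate
\begin{gather*}
\operatorname{osc}_{B(a,\epsilon_k)}v_k\le C\int_{B(a,2\epsilon_k)}|\rd v_k(y)|\,|a-y|^{-(n-1)}\vol_\bM(y)
\end{gather*}
and a capacitary weak-type inequality relating truncated Riesz 1-potentials of $L^n$-data to the Riesz 1-capacity, one obtains $C_1(E_k^\delta)\le c(\delta)\rho_k$ for $E_k^\delta:=\{a\in\bM:\operatorname{osc}_{B(a,\epsilon_k)}v_k\ge\delta\}$ with $\rho_k$ summable in $k$. A Borel--Cantelli-type argument for the Riesz 1-capacity then gives $C_1(\bigcap_N\bigcup_{k\ge N}E_k^\delta)=0$, and a countable union over $\delta\in\{1/m:m\in\N\}$ and over a countable dense family of test functions $\psi\in C^0(\bM)$ concludes $C_1(E(f))=0$.

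The main obstacle is the capacitary oscillation estimate at the very small scale $\epsilon_k=d^{-k}$: since $W^{1,n}$ functions on an $n$-manifold are only quasi-continuous with respect to the Sobolev ($n$-)capacity (whose null sets have Hausdorff dimension $0$), not with respect to the much weaker Riesz 1-capacity (whose null sets are permitted to have Hausdorff dimension up to $n-1$), a naive $W^{1,n}$ Sobolev embedding is insufficient. The required weak-type capacity estimate must exploit both the uniform $L^\infty$ bound and the exponentially decaying $W^{1,n}$ bound on $v_k$, in the spirit of the capacity arguments of Drasin--Okuyama~\cite{DrasinD:Equnt}.
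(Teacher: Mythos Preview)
Your proposal has an honestly acknowledged gap, and that gap is precisely where the paper's argument lives. The missing ingredient is not a subtle interpolation between $L^\infty$ and $W^{1,n}$ bounds, but a one-line duality trick: to bound the Riesz $1$-capacity of a ``bad'' set $E$, integrate your pointwise Riesz-potential estimate against the \emph{equilibrium measure} $\lambda_E$ of $E$, swap the order of integration, and use the universal bound $U^{\lambda_E}\le C_1(E)^{-1}$ from \eqref{eq:upper}. Concretely, for $v_k=(f^k)_*\phi/d^k$ and the global (not truncated) Riesz potential, one gets
\[
\delta \le \int_E\!\int_{\bM}\frac{|\grad v_k|(y)}{|a-y|^{n-1}}\vol_{\bM}\,\rd\lambda_E(a)
=\int_{\bM}U^{\lambda_E}(y)\,|\grad v_k|(y)\vol_{\bM}
\le C_1(E)^{-1}\,\norm{\grad v_k}_1,
\]
so $C_1(E_k^\delta)\le \norm{\grad v_k}_1/\delta$. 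Your own (correct) estimate $\norm{\rd v_k}_{n,G}\le C_\psi d^{-k/n}$ then gives $\norm{\grad v_k}_1\lesssim d^{-k/n}$, which is summable. The worry about $W^{1,n}$ quasi-continuity versus Riesz $1$-capacity is a red herring: no Sobolev embedding is used, only the $L^1$ bound on the gradient.

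Once you have this tool, the whole mollification detour (Steps 1--5) is superfluous. The paper compares $\delta_a$ directly to the fixed normalized volume form $\omega$, using a Green's-function estimate (Lemma~\ref{lemma:RP}) to write
\[
\left|\int_\bM\phi\,\rd\frac{(f^k)^*(\delta_a-\omega)}{d^k}\right|=\left|v_k(a)-\vint_\bM v_k\right|\le I_1(|\grad v_k|)(a),
\]
and then Lemma~\ref{lemma:cap_key} bounds $C_1(K_{\varepsilon,k}(\phi))\le K^{1/n}\norm{\grad\phi}_n/(\varepsilon\, d^{k/n})$ by exactly the equilibrium-measure argument above, with the gradient bound on $v_k$ obtained by a change of variables through the inverse branches of $f^k$. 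The passage from summable capacities to Hausdorff dimension $\le n-1$ then goes via Hausdorff content, as you outline. Your route through $\omega_a^{\epsilon_k}$ and localized oscillations would also close (with the same trick), but it introduces the $a$-uniform logarithmic energy estimate for $u_a^\epsilon$, which is plausible but not entirely trivial for an $\cA$-operator with merely measurable coefficients, and in the end buys nothing.
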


\begin{rem}
We note that $\mathcal{E}(f)\subset E(f)$ since for every $a\in\mathcal{E}(f)$,
 any weak limit of $(f^k)^*\delta_a/d^k$ has an atom, but $\mu_f$ has no atom
 by Theorem \ref{thm:properties}.

 Theorem \ref{th:equidist} gives a characterization of $\mu_f$
 as the unique balanced probability measure $\nu$ under $f$ whose support is 
 in $\bM\setminus E(f)$: for, also using the Fubini theorem,
\begin{multline*}
 \int_{\bM}\phi\rd\nu=\int_{\bM}\phi\rd\frac{(f^k)^*\nu}{d^k}
=\int_{\bM}\frac{(f^k)_*\phi}{d^k}\rd\nu
=\int_{\bM}(\int_{\bM}\phi\rd\frac{(f^k)^*\delta_a}{d^k})\rd\nu(a)\\
=\int_{\bM\setminus E(f)}(\int_{\bM}\phi\rd\frac{(f^k)^*\delta_a}{d^k})\rd\nu(a)
 \weakto\int_{\bM\setminus E(f)}(\int_{\bM}\phi\rd\mu_f)\rd\nu
=\int_{\bM}\phi\rd\mu_f
 \end{multline*}
 as $k\to\infty$ if $\phi\in C^0(\bM)$. 
\end{rem}

For the proof of Theorem \ref{th:equidist}
we recall a well-known Riesz-potential estimate based on Green's functions; we refer to \cite[Chapter IV \S 2.3]{AubinT:Somnpr} for more details. 

\begin{lemma}
\label{lemma:RP}
For every $a\in\bM$ and every continuous $v\in W^{1,n}(\bM)$,
\begin{gather*}
\left| v(a)- \vint_{\bM} v(y) \vol_{\bM} \right| 
\le \int_{\bM} \frac{|\grad v|(y)}{|y-a|^{n-1}}\vol_{\bM}.
\end{gather*}
\end{lemma}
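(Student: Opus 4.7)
The plan is to derive this from the Green's function representation for the Laplace-Beltrami operator $\Delta$ of the Riemannian metric on the closed manifold $\bM$. Since $\bM$ is compact without boundary, there exists a Green's function $\Gamma(a,\cdot)$ satisfying $\Delta_y\Gamma(a,y)=\delta_a-1/\vol(\bM)$ weakly, smooth away from the diagonal and with the standard gradient asymptotic $|\nabla_y\Gamma(a,y)|\lesssim |y-a|^{1-n}$ on $\bM\setminus\{a\}$; this is constructed in Aubin's book Chapter IV §2.3 via a parametrix whose leading singularity is $c_n|y-a|^{2-n}$, corrected by smoother iterates.

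First I would assume $v\in C^\infty(\bM)$ and apply Green's identity on $\bM\setminus B(a,\ep)$. Letting $\ep\to 0^+$, the $\delta_a$ contribution from $\Delta_y\Gamma$ produces $v(a)$ (the boundary integral on $\partial B(a,\ep)$ is handled by the continuity of $v$ together with the asymptotics of $\Gamma$ and $\nabla\Gamma$), while the $-1/\vol(\bM)$ term contributes $-\vint_\bM v\,\vol_\bM$. After one further integration by parts, this yields the representation
\[
v(a)-\vint_\bM v\,\vol_\bM=\int_\bM\langle\nabla_y\Gamma(a,y),\nabla v(y)\rangle\,\vol_\bM(y),
\]
and a pointwise Cauchy-Schwarz combined with the gradient bound on $\Gamma$ gives the inequality, with the multiplicative constant from Aubin's estimate absorbed into the conventions of the statement.

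Second, I would extend from smooth to continuous $v\in W^{1,n}(\bM)$ by density. Choose $v_j\in C^\infty(\bM)$ with $v_j\to v$ in $W^{1,n}(\bM)$ and, after passing to a subsequence, uniformly on $\bM$ (using a mollification that preserves continuity) and $|\nabla v_j|\to|\nabla v|$ almost everywhere. The left-hand side passes to the limit since $v_j(a)\to v(a)$ and $\vint v_j\to\vint v$, and Fatou's lemma controls the right-hand side; if the right-hand side is $+\infty$ the estimate is trivial.

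The main technical obstacle is justifying the pointwise gradient estimate on $\Gamma$ in the absence of an explicit formula, which on a general closed Riemannian manifold requires the parametrix construction together with elliptic regularity of the smoother correction away from the diagonal. Once this ingredient is granted from Aubin's treatment, the rest is classical Green's identity manipulation and a routine approximation argument.
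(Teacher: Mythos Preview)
Your approach is essentially the paper's: both invoke the Green's function representation from Aubin, the gradient bound $|\nabla_y G(a,y)|\le C|y-a|^{1-n}$, Cauchy--Schwarz, and then an approximation from smooth to continuous $W^{1,n}$ functions.

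There is one slip in your approximation step. Fatou's lemma gives
\[
\int_{\bM} \frac{|\nabla v|(y)}{|y-a|^{n-1}}\,\vol_{\bM} \;\le\; \liminf_{j\to\infty}\int_{\bM} \frac{|\nabla v_j|(y)}{|y-a|^{n-1}}\,\vol_{\bM},
\]
which is the wrong direction: from $|v_j(a)-\vint v_j|\le \int|\nabla v_j|/|y-a|^{n-1}$ you need an \emph{upper} bound on the right-hand side in the limit, not a lower one, so Fatou alone does not close the argument. The paper is also terse here (after arranging $v_k\to v$ uniformly and $\nabla v_k\to\nabla v$ in $L^1$ it simply says ``the claim now follows''), but it does not invoke Fatou. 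One clean way to repair your step is to exploit the specific feature of mollification $|\nabla v_j|\le |\nabla v|*\rho_j$ together with the elementary pointwise bound $(|\cdot|^{1-n}*\rho_j)(x)\le C|x|^{1-n}$ for the mollified Riesz kernel; dominated convergence then yields $\limsup_j\int|\nabla v_j|/|y-a|^{n-1}\le \int|\nabla v|/|y-a|^{n-1}$ whenever the latter is finite, and the inequality is vacuous otherwise.
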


\begin{proof}
Suppose first that $v\in W^{1,n}(\bM)$ and set
\[
u:=v-\vint_{\bM} v(y)\vol_{\bM}.
\]
For every $a\in\bM$, by \cite[Theorem 4.13]{AubinT:Somnpr} and integration by parts, we have
\[
|u(a)| \le \int_{\bM} \langle \grad_y G(a,y), \grad u(y) \rangle \vol_{\bM}
\]
for every $u\in C^2(\bM)$ of zero mean on $\bM$. Here $G(\cdot, \cdot)$ is Green's function on $\bM$. Moreover, there exists $C>0$ so that $|\grad_y G(y,a)| \le C |y-a|^{1-n}$ 
for all distinct $a,y\in \bM$. Thus
\[
|u(a)| \le \int_{\bM} \frac{|\grad u(y)|}{|y-a|^{n-1}} \vol_{\bM}
\]
for every $u\in C^2(\bM)$ of zero mean on $\bM$.

In the general case of $v\in W^{1,n}(\bM)$, 
we may approximate, by a standard convolution argument, $v$ by $C^\infty$-smooth functions $v_k$ so that $v_k\to v$ uniformly and $\norm{\grad v-\grad v_k}_{L^1} \to 0$ as $k\to \infty$. The claim now follows.
\end{proof}

A key part of the proof of Theorem \ref{th:equidist} is the following capacity estimate.

\begin{lemma}\label{lemma:cap_key}
Let $\omega=\vol_{\bM}/\vol_{\bM}(\bM)$. 
Then for every $\phi\in C^\infty(\bM)$, every $\varepsilon>0$ and every $k\ge 0$, the subset
\[ 
K_{\varepsilon,k}(\phi):=\left\{a\in \bM \colon \left|\int_{\bM}\phi\rd\frac{(f^k)^*(\delta_a-\omega)}{d^k}\right| \ge \varepsilon \right\}
\]
is compact and 
\[
C_1\left(K_{\varepsilon,k}(\phi)\right)\le \frac{K^{1/n}\norm{\grad \phi}_n}{\varepsilon d^{k/n}}.
\]
\end{lemma}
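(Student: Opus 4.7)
The plan is to express $\int_\bM\phi\,\rd((f^k)^*(\delta_a-\omega)/d^k)$ as the oscillation $v_k(a)-\vint_\bM v_k\,\vol_\bM$ of the single continuous Sobolev function $v_k:=d^{-k}(f^k)_*\phi$. This follows from the pull-back duality $\int\phi\,\rd(f^k)^*\mu=\int(f^k)_*\phi\,\rd\mu$ together with the normalization $\int_\bM\omega=1$. Continuity of $v_k$ then shows that $K_{\varepsilon,k}(\phi)$, being the preimage of $[\varepsilon,\infty)$ under a continuous function on $\bM$, is closed and hence compact. Apply Lemma~\ref{lemma:RP} to $v_k\in W^{1,n}(\bM)$ to obtain $U^{\nu_k}(a)\ge\varepsilon$ on $K:=K_{\varepsilon,k}(\phi)$, where $\nu_k:=|\grad v_k|\vol_\bM$ is a finite Borel measure on $\bM$.

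For the capacity bound, I would integrate this pointwise inequality against the Riesz $1$-equilibrium measure $\lambda_K$ and apply Fubini together with the uniform upper bound~\eqref{eq:upper} on $U^{\lambda_K}$, yielding
\[
\varepsilon\le\int_K U^{\nu_k}\rd\lambda_K=\int_\bM U^{\lambda_K}\rd\nu_k\le\frac{\nu_k(\bM)}{C_1(K)},
\]
so $C_1(K)\le\|\grad v_k\|_{L^1(\bM)}/\varepsilon$. By H\"older's inequality and the uniform comparability of $\|\cdot\|_n$ and $\|\cdot\|_{n,G}$, in which the distortion of the invariant conformal structure $G=G_f$ contributes the factor $K^{1/n}$, this reduces the problem to an estimate on $\|\rd v_k\|_{n,G}$.

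The decisive step is the push-forward estimate
\[
\|\rd((f^k)_*\phi)\|_{n,G}^n\le d^{k(n-1)}\|\rd\phi\|_{n,G}^n,
\]
which, after dividing by $d^{kn}$, gives $\|\rd v_k\|_{n,G}\le d^{-k/n}\|\rd\phi\|_{n,G}$ and supplies the required decay in $k$. To prove it, I would differentiate the push-forward formula at regular values of $f^k$ to obtain the pointwise bound
\[
|\grad((f^k)_*\phi)(y)|_G\le\sum_{x\in(f^k)^{-1}(y)}|\grad\phi(x)|_G\,J_{f^k}^{-1/n}(x),
\]
using that $f^k$ is $G$-conformal with conformal factor $J_{f^k}^{1/n}$; raise to the $n$th power via the discrete H\"older inequality $(\sum_{i=1}^{d^k}a_i)^n\le d^{k(n-1)}\sum a_i^n$; and integrate using the quasiregular change-of-variables formula for $f^k$, in which the factor $J_{f^k}^{-1}$ cancels against $J_{f^k}$ from the area formula. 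The main obstacle is performing this pointwise gradient estimate cleanly away from the branch set of $f^k$, so that the $G$-conformality yields exactly the factor $J_{f^k}^{-1/n}$ at $d^k$ preimages and produces the exponent $d^{-k/n}$ that drives the entire equidistribution argument in Theorem~\ref{th:equidist}.
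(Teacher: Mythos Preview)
Your proposal is correct and follows essentially the same route as the paper: both express the integral as $v_k(a)-\vint_\bM v_k$ for $v_k=d^{-k}(f^k)_*\phi$, apply Lemma~\ref{lemma:RP} together with Fubini and the bound~\eqref{eq:upper} on the equilibrium potential to obtain $C_1(K_{\varepsilon,k}(\phi))\le\varepsilon^{-1}\norm{\grad v_k}_{L^1}$, and then estimate the gradient of the push-forward using local inverses of $f^k$ away from the branch set.

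The only substantive difference is in how the gradient estimate is organized. The paper works entirely in the standard Riemannian norm: it uses that each local inverse $g_j$ of $f^k$ is $K$-quasiconformal, so $|Dg_j|\le K^{1/n}J_{g_j}^{1/n}$, which produces the factor $K^{1/n}$ directly and cleanly; the invariant structure $G$ is never invoked in this lemma. You instead pass through the $G$-norm, where $f^k$ is exactly conformal, obtain the clean estimate $\norm{\rd v_k}_{n,G}\le d^{-k/n}\norm{\rd\phi}_{n,G}$, and then convert back. Your claim that this conversion contributes precisely $K^{1/n}$ is not quite accurate: the comparability constants between $|\cdot|$ and $|\cdot|_G$ depend on the eigenvalue bounds of $G$, which are controlled by $K$ but not in general equal to $K^{1/n}$ (and the H\"older step from $L^1$ to $L^n$ also brings in $\vol_\bM(\bM)^{(n-1)/n}$, a harmless constant the paper suppresses as well). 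This affects only the constant, not the decisive decay $d^{-k/n}$, so your argument is fully adequate for Theorem~\ref{th:equidist}; if you want the stated constant $K^{1/n}$ exactly, it is simpler to bypass $G$ and use the distortion inequality for the local inverses as the paper does.
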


\begin{proof}
For every $a\in\bM$, every $\varepsilon>0$ and $k\ge 0$, since
\[
\int_{\bM}\phi\rd\frac{(f^k)^*(\delta_a-\omega)}{d^k}
= \frac{1}{d^k}((f^k)_*\phi)(a)
- \int_{\bM} (f^k)_*\phi(y) \rd\omega,
\]
the set $K_{\varepsilon,k}(\phi)$ is clearly closed, so compact.

To show the second claim, let $\varepsilon>0$, $k\in \N$, and $\phi\in C^\infty(\bM)$. We may assume that $C_1(K_{\varepsilon,k}(\phi))>0$. Let $\mu$ be an equilibrium measure of $K_{\varepsilon,k}(\phi)$ with respect to $1$-Riesz kernel on $\bM$. Let $\Omega = \bM\setminus f^k(B_{f^k})$, where $B_{f^k}$ is the \emph{branch set of $f^k$}, that is, the set where $f^k$ fails to be a local homeomorphism. 

Since $\bM$ is closed, we may cover $\bM$ with finitely many $2$-bilipschitz charts. Then, applying the Vitali covering theorem locally in charts, we may cover $\Omega$, up to a 
$\omega$-nullset, with a countable collection $\{B_i\}$ 
of domains that are $2$-bilipschitz equivalent to Euclidean $n$-balls and are chosen so that $f^k$ is univalent in components of $(f^k)^{-1}(B_i)$. 
 
We denote by $g^i_j$ the inverses $B_i \to U^i_j$, 
where $U^i_j$ is a component of $(f^k)^{-1}(B_i)$ ($j\in\{1,\ldots, d^k\}$).
For every $j\in\{1,\ldots, d^k\}$, we define the mapping
\begin{gather*}
 g_j :\bigcup_i B_i \to \bM 
\end{gather*}
such that $g_j|B_i = g^i_j$ for each $i$. 
Put $U_j:=g_j(\bigcup_i B_i)$. 
Then $|\bM \setminus \bigcup_j U_j| =0$ and $U_j \cap U_{j'}=\emptyset$ if $j\ne j'$.

Put $v = v_{\phi}:=(f^k)_* \phi/d^k$. Then 
\begin{eqnarray*}
\varepsilon &\le&\int_{K_{\varepsilon,k}(\phi)}\left|\int_\bM \phi
\rd\frac{(f^k)^*(\delta_a-\omega)}{d^k}\right| \dmu(a) \\
&\le& \int_{\bM} \left| v(a)-\vint_{\bM} v(y) \vol_{\bM} \right| \dmu(a).
\end{eqnarray*}
Since $f^k(B_{f^k})$ has zero measure, by Lemma \ref{lemma:RP} 
and the change of variables, for every $j\in\{1,\ldots, d^k\}$, 
\begin{align*}
\varepsilon \le& \int_{\bM} \left( \int_{\bM} \frac{|\grad v|(y)}{|a-y|^{n-1}} \vol_{\bM} \right) \rd\mu(a) \\
=& \int_{\bM} \left( \int_{\Omega} \frac{|\grad v|(y)}{|a-y|^{n-1}} \vol_{\bM} \right) \rd\mu(a) \\
=& \int_{\bM} \left( \int_{U_j} \frac{|\grad v|(f^k(x))}{|a-f^k(x)|^{n-1}}J_{f^k}(x)\vol_{\bM}\right) \rd\mu(a) \\
=& \int_{U_j} \left( \int_{\bM} \frac{\dmu(a)}{|a-f^k(x)|^{n-1}} \right) |\grad v|(f^k(x))J_{f^k}(x) \vol_{\bM} \\
=& \int_{U_j}  U^\mu(f^k(x)) |\grad v|(f^k(x))J_{f^k}(x)\vol_{\bM} \\
\le& C_1(K_{\varepsilon,k}(\phi))^{-1} \int_{U_j}  |\grad v(x)|(f^k(x))J_{f^k}(x) \vol_{\bM}.
\end{align*}
Here the last inequality follows from \eqref{eq:upper}. Hence
\begin{eqnarray*}
C_1(K_{\varepsilon,k}(\phi)) &\le& \frac{1}{\varepsilon d^k} \sum_{j=1}^{d^k} \int_{U_j} |\grad v(x)|(f^k(x))J_{f^k}(x) \vol_{\bM} \\
&=& \frac{1}{\varepsilon d^k} \int_{\bM} |\grad v(x)|(f^k(x))J_{f^k}(x)\vol_{\bM}.
\end{eqnarray*}
On the other hand,
\begin{align*}
|\grad v|(f^k(x)) J_{f^k}(x) \le& \frac{1}{d^k} \sum_{j=1}^{d^k} \left| \grad(\phi \circ g_j)(f^k(x))\right| J_{f^k}(x) \\
\le&\frac{1}{d^k} \sum_{j=1}^{d^k} |Dg_j|(f^k(x)) |\grad \phi(x)|(g_j(f^k(x))) J_{f^k}(x)\\
\le& K^{1/n}\frac{1}{d^k} \sum_{j=1}^{d^k} |\grad \phi(x)| J_{g_j}^{1/n}(f^k(x))  J_{f^k}(x) \\
\le& K^{1/n}\frac{1}{d^k} \sum_{j=1}^{d^k} |\grad \phi(x)| J_{f^k}^{(n-1)/n}(x)\\
=& K^{1/n} |\grad \phi(x)| J_{f^k}^{(n-1)/n}(x)
\end{align*}
for almost every $x\in \bM$. Thus
\begin{align*}
\frac{1}{d^k}\int_{\bM} |\grad v|(f^k(x))J_{f^k}(x) \vol_{\bM} \le& \frac{K^{1/n}}{d^k}\int_{\bM}  |\grad \phi(x)| J_{f^k}^{(n-1)/n}(x) \vol_{\bM} \\
\le& \frac{K^{1/n}\norm{\grad \phi}_n}{d^k} \left( \int_{\bM}J_{f^k}(x) \vol_{\bM} \right)^{(n-1)/n} \\
\le& \frac{K^{1/n}\norm{\grad \phi}_n}{d^k}d^{k(n-1)/n} \\
\le& \frac{K^{1/n}\norm{\grad \phi}_n}{d^{k/n}},
\end{align*}
which completes the proof.
\end{proof}

\begin{proof}[Proof of Theorem $\ref{th:equidist}$]
Let $\omega=\vol_{\bM}/\vol_{\bM}(\bM)$. By Theorem \ref{thm:limits_Lp},
for every $a\in\bM$, we have
\begin{gather*}
 \limsup_{k\to\infty}\left|\int_{\bM}\phi\rd\left(\frac{(f^k)^*\delta_a}{d^k}-\mu_f\right)\right|
=\limsup_{k\to\infty}\left|\int_{\bM}\phi
\rd\frac{(f^k)^*(\delta_a-\omega)}{d^k}\right|.
\end{gather*}
For every $\phi\in C^{\infty}(\bM)$, we show that for every $\epsilon>0$, the set
\begin{gather*}
 E(\phi)
:=\left\{a\in\bM:\limsup_{k\to\infty}\left|\int_{\bM}\phi
\rd\frac{(f^k)^*(\delta_a-\omega)}{d^k}\right|>0\right\}
\end{gather*}
has the $(n-1+\epsilon)$-dimensional Hausdorff measure $0$. Then the proof
will be complete since a countable subset of $C^{\infty}(\bM)$ is dense in $C(\bM)$ and (\ref{eq:equilibrium}) holds.

Suppose that for some $\epsilon>0$, $\haus^{(n-1+\epsilon)}(E(\phi))>0$. 
Then there exists a compact subset $K'\subset E(\phi)$ of 
positive $(n-1+\epsilon)$-dimensional Hausdorff measure. 
For every $i\in\mathbb{N}$ and every $k\in\mathbb{N}$, we denote 
\begin{gather*}
 K'_{1/i,k}(\phi):=\left\{a\in K'\colon 
\left|\int_{\bM}\phi\rd\frac{(f^k)^*(\delta_a-\omega)}{d^k}\right|
\ge\frac{1}{i}\right\} = K'\cap K_{1/i,k}(\phi).
\end{gather*}
Then $K'_{1/i,k}(\phi)$ is compact and, by the monotonicity of capacity 
and Lemma \ref{lemma:cap_key}, 
\begin{gather*}
C_1(K'_{1/i,k}(\phi)) \le C_1(K_{1/i,k}(\phi)) \le i \frac{K^{1/n}\norm{\grad \phi}_n}{d^{k/n}}.
\end{gather*}
Thus the Hausdorff $(n-1+\epsilon)$-content $\lambda_{n-1+\epsilon}(K'_{1/i,k})$ of $K'_{1/i,k}$
is estimated as 
\begin{gather*}
 \lambda_{n-1+\epsilon}(K'_{1/i,k}(\phi))\le\left(i\frac{K^{1/n}\norm{\grad \phi}_\infty}{\delta d^{k/n}}\right)^{\frac{n-1+\epsilon}{n-1}}
\end{gather*}
for every $i\in\mathbb{N}$; see \cite[Corollary III.4.2]{LandkofN:Foumpt}. Thus 
\begin{align*}
\lambda_{n-1+\epsilon}(E(\phi))
=&\lambda_{n-1+\epsilon}\left(\bigcup_{i\in\mathbb{N}}\bigcap_{N\in\mathbb{N}}\bigcup_{k\ge N}K'_{1/i,k}(\phi)\right)\\
\le&\sum_{i\in\mathbb{N}}\lim_{N\to\infty}\sum_{k\ge N}\lambda_{n-1+\epsilon}(K'_{1/i,k}(\phi))=0.
\end{align*}
On the other hand, the assumption $\haus^{(n-1+\epsilon)}(E(\phi))>0$ implies 
that $\lambda_{n-1+\epsilon}(E(\phi))>0$. This is a contradiction. 

Now the proof is complete.
\end{proof}

\section{Proof of Theorem \ref{thm:mixing}}

We dedicate this section for the proof of the last result mentioned in Section \ref{sec:introduction}, Theorem \ref{thm:mixing}. In what follows, $G$ is an invariant conformal structure for the uniformly quasiregular endomorphism $f$ of $\bM$. In particular, $f$ is a $G$-transformation.

Suppose first that $\psi$ and $\phi$ are $C^\infty$-functions on $\bM$, and let $\omega\in C^\infty(\bigwedge^n \bM)$ be an $n$-form satisfying 
\[
\int_{\bM} \omega = 1.
\]
As in the proof of Theorem \ref{thm:limits_Lp}, let $u$ be a solution to the equation 
\[
\rd^*\cA_f(\rd u) = \star \left( \frac{f^*\omega}{d} - \omega\right)
\]
as in \eqref{eq:potential}. The first step is to show that 
\begin{equation}
\label{eq:psi_phi}
\left| \int_{\bM} \psi (\phi\circ f^k) \rd\left(\mu_f  - \frac{(f^{k+m})^* \omega}{d^{k+m}}\right) \right| 
\le C \norm{\rd u}^{n-1}_{n,G}d^{-m/n}
\end{equation}
for all $k,m\in \N$, where $C$ depends only on $\psi$, $\phi$ and $n$.

Since $f$ is a $G$-transformation, for all $k\in\mathbb{N}$,
\begin{eqnarray*}
\norm{\rd(\phi \circ f^k)}^n_{n,G} 
&=& \int_{\bM} |\rd(\phi\circ f^k)|^n_G \vol_{\bM} 
= \int_{\bM} (|\rd\phi|_G^n \circ f^k) J_{f^k} \vol_{\bM} \\
&=& d^k \int_{\bM} |\rd\phi|_G^n \vol_{\bM} 
= d^k \norm{\rd\phi}^n_{n,G}, 
\end{eqnarray*}
where the third equality follows from the change of variables. Thus
\begin{eqnarray*}
\norm{\rd(\psi (\phi\circ f^k))}_{n,G} &\le&  \norm{ (\phi\circ f^k) \rd\psi}_{n,G} + \norm{\psi \rd(\phi \circ f^k)}_{n,G} \\
&\le& \norm{\phi}_\infty \norm{\rd\psi}_{n,G} + \norm{\psi}_\infty \norm{\rd\phi}_{n,G} d^{k/n} \\
&\le& C' d^{k/n}
\end{eqnarray*}
for all $k\in\mathbb{N}$, where $C'>0$ depends only on $\psi$ and $\phi$. 
Thus, by \eqref{eq:mu_limit}, we have
\begin{eqnarray*}
\left| \int_{\bM} \psi (\phi\circ f^k) \rd\left(\mu_f  - \frac{(f^{k+m})^* \omega}{d^{k+m}}\right) \right| 
&\le& \norm{\rd (\psi (\phi\circ f^k))}_{n,G} \norm{\rd u}^{n-1}_{n,G} \frac{d^{-(k+m)/n}}{1-d^{-(k+m)/n}} \\ 
&\le& C' \norm{\rd u}^{n-1}_{n,G}\frac{d^{-m/n}}{1-2^{-2/n}},
\end{eqnarray*} 
which proves \eqref{eq:psi_phi} by choosing $C=C'/(1-2^{-2/n})$.

To show \eqref{eq:mixing}, let $\varepsilon>0$. Having \eqref{eq:psi_phi} at our disposal, we may fix $m_0>0$ so that 
\[
\left| \int_{\bM} \psi (\phi\circ f^k) \rd\left(\mu_f  - \frac{(f^{k+m})^* \omega}{d^{k+m}}\right) \right|< \varepsilon/2
\]
for all $k \in \N$ and all $m\ge m_0$. 

For each $m\ge m_0$, let us denote 
\begin{gather*}
 \tau =\tau_m= \frac{(f^m)^*\omega }{d^m}
\end{gather*}
for brevity. By the higher integrability of quasiregular mappings, we have, as in the proof of Proposition \ref{prop:standard_qr}, that $\tau \in L^p(\bigwedge^n \bM)$ for some $p>1$. Since $\phi \tau \in L^p(\bigwedge^n \bM)$, we conclude, by Theorem \ref{thm:limits_Lp}, that 
\[
\frac{(f^k)^*(\phi \tau)}{d^k} \weakto \left( \int_{\bM} \phi\rd\tau\right) \mu_f
\]
as $k\to \infty$. Thus
\begin{eqnarray*}
\int_{\bM} \psi (\phi\circ f^k)\rd \frac{(f^{k+m})^*\omega}{d^{k+m}} 
&=& \int_{\bM} \psi(\phi \circ f^k)\rd \frac{(f^k)^*\tau}{d^k} \\
&=& \int_{\bM} \psi\rd \frac{(f^k)^*(\phi \tau)}{d^k} 
\to \left( \int_{\bM} \psi \dmu_f \right) \left( \int_{\bM} \phi \rd\tau \right)
\end{eqnarray*}
as $k\to \infty$. 

In particular, there exists $k_0=k_0(m)\ge 0$ so that 
\[
\left| \int_{\bM} \psi (\phi\circ f^k)  \dmu_f  - \left( \int_{\bM} \psi \dmu_f \right) \left( \int_{\bM} \phi\rd \frac{(f^m)^*\omega}{d^m}\right)\right| < \varepsilon
\]
if $k\ge k_0$ and $m\ge m_0$. Since 
\[
\frac{(f^m)^*\omega}{d^m}\weakto \mu_f
\]
as $m\to \infty$, we obtain \eqref{eq:mixing} for smooth functions $\psi$ and $\phi$ on $\bM$. 

The equality \eqref{eq:mixing} for $\psi,\phi\in C^\infty(\bM)$ together with standard approximation arguments yield
\[
\mu_f(f^{-k}(A)\cap B) \to \mu_f(A)\mu_f(B)
\]
for all Borel sets $A$ and $B$ in $\bM$ and \emph{a fortiori} 
\eqref{eq:mixing} for all $\psi,\phi \in L^2(\mu_f)$. In particular, $\mu_f$ is ergodic; 
see e.g.\;\cite[Section 2.5]{PetersenK:Ergt}. The proof of Theorem \ref{thm:mixing} is complete. \qed

\end{document}